\title[New Lie algebras over $\bb Z_2^3$]{New  Lie algebras over the group $\bb Z_2^3$}
\author[]{Francisco Cuenca Carr\'egalo$^{\ast}$   and Cristina Draper$^{\star}$ 
}
\subjclass[2010]{Primary  
17B70; 
Secondary 
17B30. 
}
\keywords{Generalized group algebras, Lie algebras over   $\bb Z_2^3$, group gradings, twisted group algebras, graded contractions,  orthogonal algebras,  solvable and nilpotent Lie algebras, octonions}
\thanks{$^{\star}\,$Supported by Junta de Andaluc\'{\i}a  through project  FQM-336,   and  by the Spanish Ministerio de Ciencia e Innovaci\'on   through projects PID2020-118452GB-I00 and PID2023-152673GB-I00, with FEDER funds. $^{\ast}$ Supported by PID2023-152673GB-I00.}
\newcommand{\la}{\mathfrak{L}} 
\newcommand{\f}[1]{\mathfrak{#1}}
\newcommand{\supp}[1]{\Supp^{#1}} 
\newcommand{\bb}[1]{\mathbb{#1}}
\newcommand{\ang}[1]{\langle #1 \rangle}
\newcommand{\ep}{\varepsilon}
\newcommand{\comment}[1]{}
\DeclareMathOperator{\Supp}{\mathcal{S}}
\DeclareMathOperator{\ad}{ad}
\theoremstyle{plain} 
\newtheorem{theorem}{Theorem}[section]
\newtheorem{lemma}[theorem]{Lemma}
\newtheorem{cor}[theorem]{Corollary}
\newtheorem{prop}[theorem]{Proposition}
\theoremstyle{definition} 
\newtheorem{define}[theorem]{Definition}
\newtheorem{remark}[theorem]{Remark}
\newtheorem{example}[theorem]{Example}
\begin{document}


\maketitle

\begin{abstract}
A new structure, based on joining copies of a group by means of a \emph{twist}, has recently  been considered to describe the brackets of the two exceptional real Lie algebras of type $G_2$ in a highly symmetric way. 
In this work we show that these are not  isolated examples, providing a wide range of Lie algebras which are generalized group  algebras over the group  $\mathbb{Z}_2^3$. 
On the one hand, some orthogonal Lie algebras are quite naturally generalized group   algebras over such group. 
On the other hand, previous classifications on graded contractions can be applied to this context getting many more examples, involving solvable and nilpotent Lie algebras of dimensions 32, 28, 24, 21, 16 and 14.

\end{abstract}
	
 \section{Introduction }  
This paper revolves around two ideas, involving the two concepts of generalized group algebra and of graded contraction, both focussed on Lie algebras. 
A \emph{generalized group algebra} generalizes both  a twisted group algebra $\mathbb{F}^\sigma[G]$, for $\bb F$  a field and $\sigma\colon G\times G\to \bb F$ a map (the earliest works date from the 1960s, as \cite{Twistedgroupalgebras64}),  and a group ring   $R[G]$, in case  the ring $R$ is also a vector space over $\bb F$ (see, for instance, \cite{passman2011algebraic}). 
Although at first glance a generalized group algebra would appear to be a strange object, of which there are no or uninteresting examples, and above all with no apparent connection to Lie algebras, the   definition is motivated by a very remarkable example 
appeared in \cite{draper2024twistedg2}. That work  provided a nice and practical model of each of the two real Lie algebras of type 
 $G_2$, both the compact   and the split ones, as generalized group algebras over $\bb Z_2^3$. 
 In general, it is particularly difficult to find ways of describing the compact Lie algebra $\f g_{2,-14}$. Its usual description 
  as the Lie algebra of derivations of the octonion division algebra (or of the octonion split algebra, in the $\f g_{2,2}$-case),  
  is not   easy to use at all, because not even the elements have intuitive expressions as linear operators. 
 The mentioned models as generalized group algebras in  \cite{draper2024twistedg2} exploited the symmetry on the group $\bb Z_2^3$, providing the bracket in a self-contained way, without the need for prior knowledge of either octonions or their derivations. 
 A first question is whether the concept of generalized group algebra can be useful for describing further Lie algebras, or whether the two examples of $G_2$-type are isolated examples. 
 In this work, we will find generalized group algebras that are reductive (direct sum of a semisimple ideal  with the center),
   solvable Lie algebras, and nilpotent Lie algebras, displaying a considerable range of examples. In other words, the concept deserves further study. 
   
 The second goal is to apply   \emph{graded contractions} to find new Lie algebras with precise brackets. This concept comes from physics, from the long physics tradition of varying the product, either by complicating or simplifying it. 
 Take care with the fact that a graded contraction, as introduced in \cite{patera1991discrete},
  is not a contraction
which is graded, since it is defined algebraically and not by a limiting process.
More precisely, it consists of preserving the bracket between two homogeneous components up to scalar multiple depending on some \emph{contraction} $\sigma\colon G\times G\to \bb F$.  %
 In \cite{draper2024gradedb3d4}, graded contractions of the orthogonal Lie algebras $\f{so}_8(\bb C)$ and $\f{so}_7(\bb C)$ are completely classified up to equivalence (which means that the related Lie algebras are isomorphic), 
 obtaining   a large list of Lie algebras.
 The considered  $\bb Z_2^3$-gradings are not arbitrary, they are chosen because they share important properties with the $\bb Z_2^3$-grading on $\f g_2$ coming from octonions. 
 The point is that one cannot know the exact bracket in the contracted Lie algebras without knowing first a precise description of the original bracket in the orthogonal algebra adapted to the grading. 
 Once you have described the orthogonal algebras as generalized group algebras, you get as a bonus the concrete description as
 generalized group algebras of all the Lie algebras obtained  by graded contraction.
  This will greatly increase the number of examples available. 
 \smallskip

The structure of this work  follows. 
The definition of a generalized group  algebra is stated in Section~\ref{subsec_maindef}, as a generalization of a twisted group algebra when replacing the field $\bb F$ with $V$ a vector space over $\bb F$. This requires of a map 
$\sigma: G\times G \rightarrow \mathrm{Bil}(V\times V,V)$, called, by analogy, a twist.
As the definition is quite new, some examples follow.  It is not difficult to show, 
in Proposition~\ref{pr_d4} and Corollary~\ref{cor_casob3}, that  some orthogonal Lie algebras of size $8$ and $7$ are two more   examples of generalized group  algebras. 
They can be constructed by  copying 
convenient $\bb Z_2^3$-gradings on those orthogonal Lie algebras. 
Results on $\f g_2$ as Lie algebra over $\bb Z_2^3$ are recovered too. 
This approach of emphasizing the role of the group has several advantages, 
as shown in Sections~\ref{se_kappa} and \ref{se_rep}, which deal with the Killing form
and with the representations compatible with the gradings.
The second part of this work, developed in Section~\ref{se_3}, is focused on obtaining more examples of generalized group  algebras.
The background on graded contractions is recalled in 
 Section~\ref{subsec_prelim_gr_cont}, jointly with the classification up to equivalence of the graded contractions of our remarkable gradings   in Section~\ref{subsec_clasificacion}.
 The crucial Lemma~\ref{le_crucial} allows us to obtain  a large collection of generalized group  algebras in Corollary~\ref{cor_results}. 

\section{Some orthogonal   algebras which are generalized group algebras}

Throughout this work, $\bb F$ will be an arbitrary field, most of times  of   characteristic different from 2 and 3.  

\subsection{Generalized group   algebras which are Lie algebras}\label{subsec_maindef}

For  $G$ an abelian group, and a map  $\sigma\colon G\times G\to \bb F$, 
the  \emph{twisted group algebra} $\bb F^\sigma[G]$ 
  consists of endowing $\bb F[G]=\{\sum_{g\in G}\alpha_gg:\alpha_g\in \bb F\}$   with the only product   defined by bilinear extension of  $g\cdot h=\sigma(g,h)(g+h)$.
This structure encompasses a wide range of examples of different types of algebras, for instance octonion algebra was described as $\mathbb{F}^\sigma[\mathbb{Z}_2^3]$ in \cite{albuquerque1999quiasialgebra}, Clifford algebras as $\mathbb{F}^\sigma[\mathbb{Z}_2^n]$ in \cite{elduque2018clifford}, and Albert algebra as $\mathbb{F}^\sigma[\mathbb{Z}_3^3]$ in \cite{elduque2012weyl} for convenient maps $\sigma$'s.
 When replacing the field $\bb F$   with a ring $R$, we can consider the  \emph{group ring} of $G$ with coefficients in $R$ again as the set of formal sums 
 $R[G]=\{\sum_{g\in G}r_gg:r_g\in R\}$ with  product   extending $(rg)\cdot(r'h)=(rr')(g+h)$. In both cases, two elements
  are considered equal if and only if the coefficients of each group element
are equal. 
In  \cite{draper2024twistedg2},  a mixture of the above two concepts   appears,  in principle for describing the smallest of the exceptional Lie algebras. 
The term that was used there, \emph{twisted ring  group} (Eq.~\eqref{eq_definciondelproducto} below) is probably   inadequate: it does not seem to fit our usage very well (with a strong focus on Lie algebras), nor does it seem to coincide with previous uses of the term (see the book \cite[Chapter 1, Section 2]{passman2011algebraic}).   The construction in \cite{draper2024twistedg2} is well suited to the following definition, where the only we need is a triple $(G,V,\sigma)$ with $V$  a vector space.

\begin{define}\label{def_twisted}
  
    Let $(G,+)$ be an abelian group, $V$  a vector space over $\bb F$,  and $\sigma: G\times G \rightarrow \mathrm{Bil}(V\times V,V),\,(g,h)\mapsto\sigma_{g,h}$ a map. We endow  the set of formal sums
    $$
    V^\sigma[G]:=\left\{ \sum_{g\in G}r_g g: r_g \in V \right\}
    $$ 
    with  an $\bb F$-algebra structure by means of $\alpha(r_gg):=(\alpha r_g)g$ and 
  \begin{equation}\label{eq_definciondelproducto}
    \left(\sum_{g\in G}r_g g \right) \cdot  \left(\sum_{h\in G}s_hh\right) :=  \sum_{g,h\in G} \sigma_{g,h}(r_g,s_h)(g+h),
  \end{equation} 
for $r_g,s_h\in V$, $g,h\in G$ and $\alpha \in \bb F$.
We refer to this algebra  $ V^\sigma[G]$ as  \emph{generalized group   algebra}, or simply \emph{algebra over $G$}, with the aim of emphasizing the role played by the concrete group. In case that the generalized group   algebra  $ V^\sigma[G]$ 
  turns out to be a Lie algebra with this product,   we will refer to it as a \emph{Lie algebra over $G$}, and the product \eqref{eq_definciondelproducto} will be written with a bracket. Sometimes, we will talk about $\sigma$ as a \emph{twist}\footnote{The term \emph{twist} is inspired in Example~\ref{examples}(1), although in the general case, we are not twisting any previous product in $V$, rather,  each $\sigma_{g,h}$ endows the vector space  $V$ with a ring structure.}.
\end{define}

\begin{example}\label{examples}
\begin{enumerate}
\item Any {twisted group algebra} $\bb F^\sigma[G]$  is a  generalized group algebra for $V=\bb F$ and
 $\sigma\colon G\times G\to \bb F$, where we  trivially  identify $\mathrm{Bil}(\bb F\times \bb F,\bb F)$  with $\bb F$  by assigning to a bilinear map the image of $(1,1)$. 
 Here the general conditions for the generalized group   algebra to be a Lie algebra turn out to be 
 $$
 \sigma(g,h)=-\sigma(h,g),\qquad \sum_{g,h,k\ \mathrm{ cyclic}}\sigma(g,h)\sigma(g+h,k)=0,
 $$
 which translate the skew-symmetry and the Jacobi identity respectively. 
 For instance, if $\sigma\equiv 0$, $\bb F^\sigma[G]$ is an abelian Lie algebra.
 There are   nontrivial occurrences too, as the next example shows.
\item The general linear algebra $\f{gl}_n(\bb F)=(\mathrm{Mat}_{n\times n}(\bb F),[\cdot ,\cdot ])$ is a generalized group   algebra  for any algebraically closed field $\bb F$, $V=\bb F$, $G=\bb Z_n^2$ and 
$$
\sigma\colon G\times G\to \mathrm{Bil}(\bb F\times \bb F,\bb F)\equiv \bb F, \quad
\sigma((\overline{a_1},\overline{a_2}),(\overline{b_1},\overline{b_2}))=\xi^{a_2b_1}-\xi^{a_1b_2},
$$
where $\xi$ is a primitive $n$th root of the unity. For proving this, recall that $\f{gl}_n(\bb F)$ is linearly spanned by the set $\{X^aY^b:a,b=0,\dots,n-1\}$ for
\begin{equation*}
X=\left(\begin{array}{ccccc}1&0&\dots&\dots&0\\0&\xi&0&\dots&0
\\\vdots&\ddots&\ddots&\ddots&\vdots\\0&\ldots&0&\xi^{n-2}
&0\\0&\ldots&\ldots&0&\xi^{n-1}\end{array}\right),
\quad
Y=\left(\begin{array}{ccccc}0&1&0&\dots&0\\0&0&1&\dots&0
\\{\vdots}&\ddots&\ddots&\ddots&\vdots\\0&\ldots&0&0&1\\1&0&\ldots&\ldots&0
\end{array}\right).
\end{equation*}
As $X^n=I_n=Y^n$ and $YX=\xi XY$, then 
$$[X^{a_1}Y^{a_2},X^{b_1}Y^{b_2}]=(\xi^{a_2b_1}-\xi^{a_1b_2})X^{a_1+b_1}Y^{a_2+b_2}.
$$
Then the  identification $X^{a_1}Y^{a_2}\mapsto (\overline{a_1},\overline{a_2})\in G$ gives the required isomorphism.
\item Both the real algebras $\f g_{2,-14}$ and $\f g_{2,2}$ are simple ideals of generalized group   algebras  for $\bb F=\bb R$, $V=\bb F^2$, $G=\bb Z_2^3$ and the explicit $\sigma$'s described  in \cite[Theorem~1 and Corollary~3]{draper2024twistedg2}, respectively. 
To be precise, those exceptional real Lie algebras appear as $V^\sigma[G^\times]$, removing the neutral element of the group.
In order to include these and other cases of interest,  subalgebras of Lie    algebras over $G$ will be also called   Lie    algebras over $G$.
Note also that, if we consider the complex field  $\bb F=\bb C$, both the obtained Lie algebras become isomorphic to  the only complex Lie algebra of type $G_2$, another Lie algebra over $\bb Z_2^3$. 
\end{enumerate}  
\end{example}

\subsection{The orthogonal Lie algebra  of size   8 as generalized group   algebra}\label{subsec_d4istwisted}

From now on through this work, $G$ will always be $\bb Z_2^3$.
The elements in $G=\bb Z_2^3$ admit a labelling
\begin{equation}
    \label{eq: elementos de Z_2^3}
    \begin{array}{cccc}
        g_0:=(\bar{0},\bar{0},\bar{0}), & g_1:=(\bar{1},\bar{0},\bar{0}), & g_2:=(\bar{0},\bar{1},\bar{0}), & g_3:=(\bar{0},\bar{0},\bar{1}), \\
        g_4:=(\bar{1},\bar{1},\bar{0}), & g_5:=(\bar{0},\bar{1},\bar{1}), & g_6:=(\bar{1},\bar{1},\bar{1}), & g_7:=(\bar{1},\bar{0},\bar{1}), 
    \end{array}
\end{equation}
such that $g_{i}+g_{i+1}=g_{i+3}$ for any $i\in I=\{1,\dots,7\}$, where the sum of indices is considered modulo 7.
(Hence $g_{i+1}+g_{i+3}=g_{i}$ and $g_{i+3}+g_{i}=g_{i+1}$.)
For further use, denote by $i*j$ the only index in $I\cup\{0\}$ such that $g_{i}+g_{j}=g_{i*j}$.

A $G$-\emph{grading} $\Gamma$ on an algebra  $\mathcal{A}$    is a vector space decomposition $\Gamma: \mathcal{A} = \bigoplus_{g\in G} \mathcal{A}_g$ such that $\mathcal{A}_g \mathcal{A}_h \subset \mathcal{A}_{g + h}$, for all $g, \, h\in G$.  Each subspace $\mathcal{A}_g$ is called a \emph{homogeneous component}, and $g$ its \emph{degree}.  
Generalized group   algebras $V^\sigma[G]$ are naturally $G$-graded, for 
$(V^\sigma[G])_g=Vg$. 

In particular, any Lie algebra over $G$ is $G$-graded. 
(Gradings over groups    have proved to be a key tool to study Lie algebras, so the readers are kindly invited to consult the AMS monography \cite{elduque2013gradings_simple_lie} for further information on gradings on simple Lie algebras.)
Of course the converse does not necessarily occur, and a necessary condition  for a $G$-graded Lie algebra to be a 
Lie  algebra over $G$ is that all the homogeneous components have the same dimension (coinciding of course with the dimension of $V$ as a vector space over $\bb F$). 

Our main aim in this section is to describe the orthogonal Lie algebra $ \f{so}_8(\bb F)$ of the skew-symmetric matrices 
as a simple ideal of a generalized group  algebra over the group $\bb Z_2^3$, based on our knowledge of a concrete $\bb Z_2^3$-grading on $ \f{so}_8(\bb F)$ with strong symmetry properties, including that one of constant dimension of the homogeneous components (up to the neutral component, which vanishes). The main relevant facts concerning this grading can be extracted for instance from \cite{draper2024gradedb3d4}, but we will recall them here  for fixing the notation.

Denote by $\ang{\cdot,\cdot}$ the usual scalar product in $\bb F^8$, that is, $\ang{\sum_{i=0}^7x_ie_i,\sum_{i=0}^7y_ie_i}=\sum_{i=0}^7x_iy_i$, for $\{e_i:i=0,\dots,7\}$ the canonical basis of $\bb F^8$.
The orthogonal algebra  
$$
\f{so}_8(\bb F):=\{ f \in \f{gl}_8(\bb F): \ang{f(x),y}+\ang{x,f(y)}=0 \text{ for any }x,y\in\mathbb{F}^8\},
$$ 
is spanned by the linear operators $\varphi_{ij}\equiv \varphi_{e_i,e_j}$, where
$$
\varphi_{x,y}\colon \mathbb{F}^8\to \mathbb{F}^8,\quad 
\varphi_{x,y}(z):=\ang{x,z}y-\ang{y,z}x.
$$
The $G$-grading on the vector space $V=\bb F^8$  (no more than a vector space decomposition labelled on the group) obtained by assigning $\deg(e_i)=g_i\in G$, induces a $G$-grading on the orthogonal algebra, 
denoted as $\Gamma_{\f d_4}:\f{so}_8(\bb F)=\oplus_{g\in G} \f{so}_8(\bb F)_g$, 
in the usual way,
 i.e., a map $f \in \f{so}_8(\bb F)$ has degree $g$ if $f(V_h)\subset V_{g+h}$ for all $h\in G$. 
Taking into account that $\varphi_{ij}(e_k)=\delta_{ik}e_j-\delta_{jk}e_i$ ($\delta$ is used for the Kronecker delta), then $\varphi_{ij}\in  \f{so}_8(\bb F)_{g_i+g_j}$. 
As $g_{i+2}+g_{i+6}=g_{i}=g_{i+4}+g_{i+5}$ for all $i\in I$, in particular
$$
\mathcal{B}_i=\{\varphi_{{i+1}, {i+3}}, \varphi_{{i+2}, {i+6}}, \varphi_{{i+4}, {i+5}}, \varphi_{0, i}\}\subset  \f{so}_8(\bb F)_{g_i}.
$$
By dimension count, this implies that   $\mathcal{B}_i$ is a basis of $ \f{so}_8(\bb F)_{g_i}$ if $i\ne0$ and that $ \f{so}_8(\bb F)_{g_0}=0$. 
Furthermore, each homogeneous component $ \f{so}_8(\bb F)_{g_i}$ is an abelian subalgebra, since, for any $\psi\in  \f{so}_8(\bb F)$, and any pair of elements  $x,y\in\bb F^8$, the next identity holds
 \begin{equation}\label{eq_lunaazul}
 [\psi,\varphi_{x,y}]=\varphi_{\psi(x),y}+\varphi_{x,\psi(y)},
 \end{equation}
 from which it is easy to deduce  $[\mathcal{B}_i,\mathcal{B}_i]=0.$
A graded Lie algebra where all the homogeneous components (up to the neutral component) are abelian and have the same dimension is the key to endow it with a  structure of generalized group  algebra over the grading group. In our case, denote by $G^\times=G\setminus\{g_0\}$ and 
simply consider the vector space isomorphism
\begin{equation}
        \label{eq: isomorfismo entre d4 y el twisted}
        \begin{array}{cccl}
            \Psi: & \f{so}_8(\bb F) & \longrightarrow & V[G^\times]  \\
             &a_1\varphi_{{i+1}, {i+3}}+a_2 \varphi_{{i+2}, {i+6}}+a_3 \varphi_{{i+4}, {i+5}}+a_4  \varphi_{0, i}  &\longmapsto &(a_1,a_2,a_3,a_4) g_i ,
        \end{array}
    \end{equation}
    which is a graded map ($\Psi( \f{so}_8(\bb F)_{g_i})\subset Vg_i$).
    Now   define the bracket in the subspace $V[G^\times] $ (in other words, take $\sigma$) in the only way that makes $ \Psi\colon  \f{so}_8(\bb F)\to V^\sigma[G^\times] $ a Lie algebra isomorphism. For consistency with  Definition~\ref{def_twisted}, we extend $\sigma$  to the whole group $G$.
    The appropriate $\sigma$ follows:

\begin{prop}\label{pr_d4}
Let $V=\bb F^4$ and $G=\bb Z_2^3$. Then $ \la=V^\sigma[G]$ is a generalized group  algebra for the map 
$\sigma\colon G\times G \rightarrow \mathrm{Bil}(V\times V,V)$, $\sigma(g_i,g_j)\equiv \sigma_{i,j}$ given by, for any  $i\in I$,
\begin{equation}\label{eq_hola}
\begin{array}{l}
\sigma_{i,i+1}(r,s)=
 (-a_2b_1-a_3b_3,-a_2b_3-a_3b_1,a_1b_2+a_4b_4,a_1b_4+a_4b_2),\\
         \sigma_{i,i+2}(r,s)= (a_2b_3+a_4b_4,-a_1b_1-a_3b_2,-a_1b_2-a_3b_1,a_2b_4+a_4b_3),  \\ 
        \sigma_{i,i+4}(r,s)= (-a_1b_2-a_2b_3,a_3b_1+a_4b_4,-a_1b_3-a_2b_2,a_3b_4+a_4b_1),
\end{array}
\end{equation}
if $r=(a_1,a_2,a_3,a_4)$, $s=(b_1,b_2,b_3,b_4)$, 
$
\sigma_{0i}(r,s)=\sigma_{i0}(r,s)=\sigma_{00}(r,s)=\sigma_{ii}(r,s)=0
$
 and
$$
 \sigma_{i,i+3}(r,s)=-\sigma_{i,i+4}(s,r),\quad
  \sigma_{i,i+5}(r,s)=-\sigma_{i,i+2}(s,r),\quad
   \sigma_{i,i+6}(r,s)=-\sigma_{i,i+1}(s,r).
$$
Furthermore the center $\f z(\la)=\{rg_0:r\in V \}$ has dimension 4, and the derived algebra
$[\la,\la]=\langle\{rg_i:r\in V,i\in I \}\rangle\equiv V^\sigma[G^\times]$ is simple and isomorphic to $\f{so}_8(\bb F)$.
\end{prop}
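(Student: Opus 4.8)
The plan is to let the explicit isomorphism $\Psi$ of \eqref{eq: isomorfismo entre d4 y el twisted} do all the structural work, and to reduce the verification of \eqref{eq_hola} to a short, symmetric computation. I would define $\sigma$ on $G^\times\times G^\times$ as the unique twist for which $\Psi\colon \f{so}_8(\bb F)\to V^\sigma[G^\times]$ is a Lie algebra isomorphism, and extend it to $G$ by declaring $\sigma_{0i}=\sigma_{i0}=\sigma_{00}=0$; since $\f{so}_8(\bb F)_{g_0}=0$ and each $\mathcal B_i$ spans an abelian subalgebra, also $\sigma_{ii}=0$, so these prescriptions are forced. With this definition, skew-symmetry and the Jacobi identity for $\sigma$ are automatic: they are simply the pullback along the bijection $\Psi$ of the skew-symmetry and Jacobi identity of the bracket of $\f{so}_8(\bb F)$. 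Hence the only genuine content is (i) the explicit shape of $\sigma_{i,j}$ in \eqref{eq_hola}, and (ii) the assertions about $\f z(\la)$ and $[\la,\la]$.

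For (i) I would compute brackets of the basis operators through the identity obtained from \eqref{eq_lunaazul},
\[
[\varphi_{x,y},\varphi_{u,v}]=\ang{x,u}\varphi_{y,v}-\ang{y,u}\varphi_{x,v}-\ang{x,v}\varphi_{y,u}+\ang{y,v}\varphi_{x,u},
\]
specialised to the orthonormal vectors $e_0,\dots,e_7$, so that only matching indices survive. To avoid treating all $i,j$, I would exploit the cyclic symmetry built into the labelling \eqref{eq: elementos de Z_2^3}: the permutation $\theta\colon e_i\mapsto e_{i+1}$ (indices mod $7$, $e_0$ fixed) is orthogonal, it realises the group automorphism $g_i\mapsto g_{i+1}$ of $\bb Z_2^3$, and its conjugation action $\psi\mapsto\theta\psi\theta^{-1}=\varphi_{\theta(\cdot),\theta(\cdot)}$ carries $\mathcal B_i$ onto $\mathcal B_{i+1}$ \emph{coordinate by coordinate}. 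Through $\Psi$ this says that $rg_i\mapsto rg_{i+1}$ is an automorphism of $V^\sigma[G^\times]$, whence $\sigma_{i,j}$ depends only on $j-i\bmod 7$. It therefore suffices to evaluate the three representatives $\sigma_{i,i+1}$, $\sigma_{i,i+2}$, $\sigma_{i,i+4}$ (the nonzero squares mod $7$) by the displayed formula and to re-express each result in the basis $\mathcal B_{i*j}$; this yields the three lines of \eqref{eq_hola}. The remaining cases need not be computed: skew-symmetry $\sigma_{i,j}(r,s)=-\sigma_{j,i}(s,r)$ together with the $i$-independence just established gives $\sigma_{i,i+3}(r,s)=-\sigma_{i,i+4}(s,r)$, $\sigma_{i,i+5}(r,s)=-\sigma_{i,i+2}(s,r)$ and $\sigma_{i,i+6}(r,s)=-\sigma_{i,i+1}(s,r)$, because $-3,-5,-6\equiv 4,2,1\pmod 7$.

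For (ii), the extension by zero makes $Vg_0$ central, since $\sigma_{0,h}=\sigma_{g,0}=0$ annihilate every bracket involving $g_0$. Conversely, in $\bb Z_2^3$ the only way a bracket $[Vg_i,Vg_j]$ can land in $Vg_0$ is $i=j$, and then it vanishes since $\sigma_{ii}=0$; hence $[\la,\la]\subseteq V^\sigma[G^\times]$, and equality holds because $\f{so}_8(\bb F)\cong V^\sigma[G^\times]$ is perfect. As $\f{so}_8(\bb F)$ is simple for $\operatorname{char}\bb F\ne 2$, the ideal $V^\sigma[G^\times]$ is simple with trivial center; writing a central element as $z_0+z'$ with $z_0\in Vg_0$ and $z'\in V^\sigma[G^\times]$, the component $z'$ must be central in the simple ideal and so vanishes. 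Thus $\f z(\la)=Vg_0$ has dimension $\dim V=4$, and $[\la,\la]=V^\sigma[G^\times]\cong\f{so}_8(\bb F)$ via $\Psi^{-1}$, as claimed.

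The main obstacle is purely organisational: the arithmetic modulo $7$ governed by $g_i+g_{i+1}=g_{i+3}$, which dictates into which of the four vectors of $\mathcal B_{i*j}$ each product $\varphi_{kl}$ falls. The cyclic automorphism $\theta$ trims the number of cases from the six nonzero differences to three, and skew-symmetry disposes of the rest, so that the computation, although delicate in its index bookkeeping, is short. A minor point to keep honest is the simplicity of $\f{so}_8(\bb F)$ over a general field: this is the standard fact that the orthogonal Lie algebra of a nondegenerate form in dimension $8$ is simple in characteristic $\ne 2$ (type $D_4$), consistent with the running hypotheses on $\bb F$.
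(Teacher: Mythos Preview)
Your proposal is correct and follows essentially the same route as the paper: define $\sigma$ so that $\Psi$ is forced to be an isomorphism, compute the three bracket tables for $j-i\in\{1,2,4\}$ via \eqref{eq_lunaazul}, and dispatch the remaining three differences by skew-symmetry. Your explicit invocation of the cyclic automorphism $\theta\colon e_i\mapsto e_{i+1}$ to justify that $\sigma_{i,j}$ depends only on $j-i\bmod 7$ is a clean conceptual packaging of what the paper leaves implicit by writing its tables for generic $i$, and your treatment of $\f z(\la)$ and $[\la,\la]$ is somewhat more detailed than the paper's, but neither constitutes a genuinely different argument.
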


\begin{proof}
 It is clear that $[rg_0,V^\sigma[G] ]=0$, so that $Vg_0$ is central and
 we have only to check that   the map $ \Psi\colon  \f{so}_8(\bb F)\to V^\sigma[G^\times] \le\la$,  defined in Eq.~\eqref{eq: isomorfismo entre d4 y el twisted}, is an algebra isomorphism (in particular $V^\sigma[G^\times] $ would be a Lie algebra). Thus, 
 let us check that $\Psi([f,f'])=[\Psi(f),\Psi(f')]$ for any homogeneous elements  $f,f'\in  \f{so}_8(\bb F)$ of degrees $i,j\in I$, respectively. For
    $$\begin{array}{ll}
    f= a_1\varphi_{{i+1}, {i+3}}+a_2 \varphi_{{i+2}, {i+6}}+a_3 \varphi_{{i+4}, {i+5}}+a_4\varphi_{0, i},&r=(a_1,a_2,a_3,a_4),\\
    f'= b_1\varphi_{{j+1}, {j+3}}+b_2 \varphi_{{j+2}, {j+6}}+b_3 \varphi_{{j+4}, {j+5}}+b_4\varphi_{0, j},&r'=(b_1,b_2,b_3,b_4),
    \end{array}
    $$
    we have 
    $
    [\Psi(f),\Psi(f')]=[rg_i,r'g_{j}]=\sigma_{i,j}(r,r')g_{i*j},
    $
    and we want to check that this coincides with $\Psi ([f,f'])$.
  If $i=j$, no problem arises since $ \f{so}_8(\bb F)_{g_i}$ is   abelian  and $\sigma_{ii}\equiv0$.
 Compute, for $j=i+1$, the brackets of basic elements with the help of \eqref{eq_lunaazul},
     \begin{center}  
    \begin{tabular}{|c|cccc|}
        \hline 
         $[\cdot,\cdot]$ & $\varphi_{ {j+1},  {j+3}}$ & $\varphi_{ {j+2},  {j+6}}$& $\varphi_{ {j+4},  {j+5}}$& $ \varphi_{ 0,  j}$ \\
        \hline
        $\varphi_{ {i+1},  {i+3}}$ & $0$ & $\varphi_{ {i*j+4}, {i*j+5}}$ & $0$ & $\varphi_{ 0, {i*j}}$\\
        $\varphi_{ {i+2},  {i+6}}$ & $-\varphi_{ {i*j+1}, {i*j+3}}$ & $0$ & $-\varphi_{ {i*j+2}, {i*j+6}}$ &$ 0$\\
        $\varphi_{ {i+4},  {i+5}}$ & $-\varphi_{ {i*j+2}, {i*j+6}}$ & $0$ & $-\varphi_{ {i*j+1}, {i*j+3}}$ & $0$\\
        $\varphi_{ 0,  i}$ & $0$ & $\varphi_{ {0}, {i*j}}$ & $0$ & $\varphi_{ {i*j+4}, {i*j+5}}$ \\
        \hline 
    \end{tabular}
    \end{center}
This immediately gives 
    $$
    [f,f']=(-a_2b_1-a_3b_3)\varphi_{i+4,i+6}+(-a_2b_3-a_3b_1)\varphi_{i+5,i+2}+(a_1b_2+a_4b_4)\varphi_{i,i+1}+(a_1b_4+a_4b_2)\varphi_{0,i+3}
    $$
  so that
    $$
    \Psi([f,f'])=(-a_2b_1-a_3b_3,-a_2b_3-a_3b_1,a_1b_2+a_4b_4,a_1b_4+a_4b_2)g_{i+3}=\sigma_{i,i+1}(r,r')g_{i*(i+1)}.
    $$
    Similarly we have to compute the brackets among basic elements for the case $j=i+2$,  
     \begin{center}  
    \begin{tabular}{|c|cccc|}
        \hline 
         $[\cdot,\cdot]$ & $\varphi_{ {j+1},  {j+3}}$ & $\varphi_{ {j+2},  {j+6}}$& $\varphi_{ {j+4},  {j+5}}$& $ \varphi_{ 0,  j}$ \\
        \hline
        $\varphi_{ {i+1},  {i+3}}$ & $-\varphi_{i*j+2,i*j+6}$ &$-\varphi_{i*j+4,i*j+5}$ &0&0\\
        $\varphi_{ {i+2},  {i+6}}$ &0 &0&$\varphi_{i*j+1,i*j+3}$ &$\varphi_{0,i*j}$ \\
        $\varphi_{ {i+4},  {i+5}}$ &$-\varphi_{i*j+4,i*j+5}$ &$-\varphi_{i*j+2,i*j+6}$ &0&0 \\
        $\varphi_{ 0,  i}$ & 0 &0&$\varphi_{0,i*j}$ &$\varphi_{i*j+1,i*j+3}$  \\
        \hline 
    \end{tabular}
    \end{center}
    and  in case $j=i+4$,
     \begin{center}  
    \begin{tabular}{|c|cccc|}
        \hline 
         $[\cdot,\cdot]$ & $\varphi_{ {j+1},  {j+3}}$ & $\varphi_{ {j+2},  {j+6}}$& $\varphi_{ {j+4},  {j+5}}$& $ \varphi_{ 0,  j}$ \\
        \hline
        $\varphi_{ {i+1},  {i+3}}$ & 0 &$-\varphi_{i*j+1,i*j+3}$ &$-\varphi_{i*j+4,i*j+5}$ &0\\
        $\varphi_{ {i+2},  {i+6}}$ &0 &$-\varphi_{i*j+4,i*j+5}$ &$-\varphi_{i*j+1,i*j+3}$ &0\\
        $\varphi_{ {i+4},  {i+5}}$ &$\varphi_{i*j+2,i*j+6}$ &0&0&$\varphi_{0,i*j}$  \\
        $\varphi_{ 0,  i}$ &  $\varphi_{0,i*j}$ &0&0&$\varphi_{i*j+2,i*j+6}$  \\
        \hline 
    \end{tabular}
    \end{center}
The cases $j=i+6,i+5,i+3$ are consequence of the skew-symmetry in $\f{so}_8(\bb F)$
and of the   fact $\sigma_{ij}(r,s)=-\sigma_{ji}(s,r)$. For instance, if $j=i+6$, as $i=(i+6)+1$,
\begin{multline}
 [\Psi(f),\Psi(f')]=\sigma_{i,i+6}(r,r')(g_i+g_{i+6})=-\sigma_{i+6,i}(r',r)g_{i+2}\\
 =-\sigma_{i,i+1}(r',r)( g_{i+6}+g_{i})=-[\Psi(f'),\Psi(f)]= -\Psi([f',f])=\Psi([f,f']).
 \end{multline}
 This finishes the proof.
\end{proof}

\subsection{The series $\f{g}_2\subset \f{b}_3\subset \f{d}_4 $ as Lie   algebras over $\bb Z_2^3$}\label{subsec_b3istwisted}

From the above construction of the orthogonal algebra of size 8 as generalized group algebra, we can deduce that   the orthogonal 
algebra of size 7 is another example of generalized group algebra, and then recover the result of \cite{draper2024twistedg2} 
which says that a certain subalgebra of type $G_2$   can be described too as a Lie   algebra over $\bb Z_2^3$.   

First, if we consider the vector subspace $V'=\{(a,b,c,0):a,b,c\in \bb F\}\le V$, it is clear that $V'[G]=\left\{ \sum_{g\in G}r_g g: r_g \in V' \right\}$ is closed for the bracket considered in
Proposition~\ref{pr_d4}, hence it is a Lie subalgebra of 
$V^\sigma[G]$. This is precisely the sum of an orthogonal algebra of size 7 (simple Lie algebra of type $B_3$) with a 3-dimensional center:

\begin{cor}\label{cor_casob3}
Let $W=\bb F^3$ and $G=\bb Z_2^3$. Then $ \la=W^\sigma[G]$ is a generalized group  algebra for the map 
$\sigma\colon G\times G \rightarrow \mathrm{Bil}(W\times W,W)$, $\sigma(g_i,g_j)\equiv \sigma_{i,j}$ given by, for any  $i\in I$,
\begin{equation}\label{eq_hola2}
\begin{array}{l}
\sigma_{i,i+1}(r,s)=
 (-a_2b_1-a_3b_3,-a_2b_3-a_3b_1,a_1b_2),\\
         \sigma_{i,i+2}(r,s)= (a_2b_3,-a_1b_1-a_3b_2,-a_1b_2-a_3b_1),  \\ 
        \sigma_{i,i+4}(r,s)= (-a_1b_2-a_2b_3,a_3b_1,-a_1b_3-a_2b_2),
\end{array}
\end{equation}
if $r=(a_1,a_2,a_3)$, $s=(b_1,b_2,b_3)$, 
$
\sigma_{0i}(r,s)=\sigma_{i0}(r,s)=\sigma_{00}(r,s)=\sigma_{ii}(r,s)=0
$
 and
$$
 \sigma_{i,i+3}(r,s)=-\sigma_{i,i+4}(s,r),\quad
  \sigma_{i,i+5}(r,s)=-\sigma_{i,i+2}(s,r),\quad
   \sigma_{i,i+6}(r,s)=-\sigma_{i,i+1}(s,r).
$$
Furthermore the center $\f z( \la)=\{rg_0:r\in W \}$ has dimension 3, and the derived algebra
$[\la,\la]=\langle\{rg_i:r\in W,i\in I \}\rangle\equiv W^\sigma[G^\times]$ is simple and isomorphic to $\f{so}_7(\bb F)$.
\end{cor}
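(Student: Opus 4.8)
The plan is to deduce the whole statement from Proposition~\ref{pr_d4} by restricting to the distinguished three-dimensional subspace $V'=\{(a,b,c,0):a,b,c\in\bb F\}\le V=\bb F^4$ and identifying $W=\bb F^3$ with $V'$ via $(a,b,c)\mapsto(a,b,c,0)$. First I would check that $V'$ is a subalgebra of $V$ for the twist $\sigma$ of Proposition~\ref{pr_d4}; this amounts to verifying that the fourth coordinate of each $\sigma_{i,j}(r,s)$ in \eqref{eq_hola} vanishes whenever $a_4=b_4=0$. Inspecting \eqref{eq_hola}, the fourth components are $a_1b_4+a_4b_2$, $a_2b_4+a_4b_3$ and $a_3b_4+a_4b_1$, which indeed vanish, and the remaining three coordinates reduce to exactly the formulas \eqref{eq_hola2}. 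Hence $W^\sigma[G]$ is isomorphic to the subalgebra $V'^\sigma[G]\le V^\sigma[G]$, and in particular it is a generalized group algebra which is a Lie algebra over $G$.

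Next I would identify $W^\sigma[G^\times]$ with a concrete subalgebra of $\f{so}_8(\bb F)$ through the isomorphism $\Psi$ of \eqref{eq: isomorfismo entre d4 y el twisted}. Under $\Psi^{-1}$, an element $(a_1,a_2,a_3,0)g_i$ of $V'^\sigma[G^\times]$ corresponds to $a_1\varphi_{i+1,i+3}+a_2\varphi_{i+2,i+6}+a_3\varphi_{i+4,i+5}$, so discarding the fourth coordinate removes precisely the operators $\varphi_{0,i}$. These are the only basic operators that do not annihilate $e_0$, since $\varphi_{jk}(e_0)=0$ whenever $j,k\neq0$ while $\varphi_{0,i}(e_0)=e_i$. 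Therefore $\Psi^{-1}(V'^\sigma[G^\times])=\{f\in\f{so}_8(\bb F):f(e_0)=0\}$. Any such $f$ preserves $e_0^\perp=\ang{e_1,\dots,e_7}$ because $\ang{f(x),e_0}=-\ang{x,f(e_0)}=0$, and restricts there to a skew-symmetric operator for the standard form; thus this subalgebra is exactly $\f{so}_7(\bb F)$, simple of type $B_3$ in characteristic different from $2$. Consequently $W^\sigma[G^\times]\cong V'^\sigma[G^\times]\cong\f{so}_7(\bb F)$.

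The claims on the center and the derived algebra then follow formally. Since $\sigma_{0i}=\sigma_{i0}=\sigma_{00}=0$, the subspace $Wg_0$ is central and $\la=Wg_0\oplus W^\sigma[G^\times]$ as vector spaces with $Wg_0\subseteq\f z(\la)$, so $\dim\f z(\la)\ge3$. Writing a central element as $z=r_0g_0+u$ with $u\in W^\sigma[G^\times]$, centrality of $z$ forces $[u,W^\sigma[G^\times]]=0$, whence $u$ lies in the center of the simple algebra $\f{so}_7(\bb F)$ and $u=0$; therefore $\f z(\la)=Wg_0$ has dimension exactly $3$. Likewise, using that $Wg_0$ is central and that the simple algebra $\f{so}_7(\bb F)$ is perfect, $[\la,\la]=[W^\sigma[G^\times],W^\sigma[G^\times]]=W^\sigma[G^\times]$, as claimed.

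The only step requiring genuine care is the geometric identification in the second paragraph: one must confirm that discarding the fourth coordinate corresponds exactly to passing from $\f{so}_8(\bb F)$ to the stabilizer of $e_0$, and that this stabilizer is a faithful copy of $\f{so}_7(\bb F)$. Everything else is either a direct inspection of the explicit twist \eqref{eq_hola} or a formal consequence of the simplicity already established in Proposition~\ref{pr_d4}.
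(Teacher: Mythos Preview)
Your argument is correct and follows essentially the same route as the paper: restrict the twist of Proposition~\ref{pr_d4} to the hyperplane $V'=\{(a,b,c,0)\}$, observe that the formulas in \eqref{eq_hola2} are obtained from \eqref{eq_hola} by setting $a_4=b_4=0$ and dropping the last coordinate, and then identify $(V')^\sigma[G^\times]$ with $\f{so}_7(\bb F)$ via $\Psi^{-1}$. Your phrasing of this last identification as the stabilizer $\{f\in\f{so}_8(\bb F):f(e_0)=0\}$ is equivalent to the paper's direct description of $\f{so}_7(\bb F)$ as the orthogonal algebra on $\langle e_1,\dots,e_7\rangle$ with basis $\mathcal{B}'_i=\{\varphi_{i+1,i+3},\varphi_{i+2,i+6},\varphi_{i+4,i+5}\}$; you also spell out the center and derived-algebra verifications, which the paper leaves implicit by analogy with Proposition~\ref{pr_d4}.
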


\begin{proof}
The fact that $ \la=W^\sigma[G]$ is a generalized group  algebra for our choice of the twist  $\sigma$ was justified before the proof:
the concrete expression comes from substituting $a_4=b_4=0$ in the twist considered  in 
Proposition~\ref{pr_d4}, and then deleting the last coordinate. 

Thus, we have only to check that the   Lie subalgebra $(V')^\sigma[G^\times]$ is isomorphic to $\f{so}_7(\bb F)$. 
Indeed, if we identify  the vector subspace of $\bb F^8$ spanned by  $\{e_i:i=1,\dots,7\}$ with $\bb F^7$,   
choosing a scalar product $\ang{\cdot,\cdot}$ which makes such basis to be orthonormal,
and we grade $\bb F^7$ by assigning again $\deg(e_i)=g_i\in \bb Z_2^3$ for all $i\in I$,
the related orthogonal Lie algebra  is $\bb Z_2^3$-graded too, and 
$$
\mathcal{B}'_i=\{\varphi_{{i+1}, {i+3}}, \varphi_{{i+2}, {i+6}}, \varphi_{{i+4}, {i+5}} \}\subset  \f{so}_7(\bb F)_{g_i}
$$ 
is a basis of the homogeneous component   of degree ${g_i}$. Moreover,  this basis $\mathcal{B}'_i$ consists of the first 3 vectors of the basis $\mathcal{B}_i$ of $\f{so}_8(\bb F)_{g_i}$.  
\end{proof}

More details on this $\bb Z_2^3$-grading on $\f{so}_7(\bb F)$ are given in \cite[Lemma~2.2]{draper2024gradedb3d4}, so we
will  use the same notation as there, $\Gamma_{\f b_3}$. That work uses the complex field to obtain a complete classification of graded contractions, but   the concrete field was not relevant for describing the main properties of the grading, which remain valid in our setting.

\begin{remark}
As we know, if $\bb F$ is the real field, $\f{so}_8(\bb R)$ and $\f{so}_7(\bb R)$ are compact Lie algebras (negative definite Killing form), so   this will also be the case for
both $V^\sigma[G^\times]$ in Proposition~\ref{pr_d4} and $W^\sigma[G^\times]$ in Corollary~\ref{cor_casob3}. (The Killing form is discussed  below in Section~\ref{se_kappa}.) The use of the group $G=\bb Z_2^3$ is not a coincidence, there does not exist a $H$-grading on any  compact Lie algebra for $H\ne \bb Z_2^n$ (see, for instance, \cite[Proposition~1]{calderondrapermartin2010reales}). Moreover, our algebras are strongly related with the octonion division algebra,   which is, in turn, a twisted group algebra over $\bb Z_2^3$ (\cite{albuquerque1999quiasialgebra}).
\end{remark}

Recall that the only 
example of generalized group  algebra that has already been studied 
   is that one of $\f g_2$ in \cite{draper2024twistedg2} (real and complex field). As this Lie algebra lives inside the orthogonal algebra  $\f{so}_7(\bb F)$, and the related $G$-gradings are compatible, then the
Lie algebra $\f g_2$ should live as a subalgebra of that one in Corollary~\ref{cor_casob3}. It is convenient to locate this subalgebra.

\begin{lemma}\label{re_encajecong2}
Take the vector subspace $S=\{(a_1,a_2,a_3):a_1+a_2+a_3=0\}$ of $W=\bb F^3$. 
The map
 $\sigma$   in Corollary~\ref{cor_casob3} satisfies  $\sigma_{g,h}(S,S)\subset S$ for all $g,h\in G$, so that $ S^\sigma[G]$ is also a generalized group  algebra.
\end{lemma}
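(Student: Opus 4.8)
The plan is to observe that $S$ is precisely the kernel of the linear functional $\ell\colon W\to\bb F$, $\ell(a_1,a_2,a_3)=a_1+a_2+a_3$. Consequently the containment $\sigma_{g,h}(S,S)\subset S$ is equivalent to the single scalar identity $\ell\bigl(\sigma_{g,h}(r,s)\bigr)=0$ for all $r,s\in S$ and all $g,h\in G$. Thus the entire lemma reduces to checking that, whenever $a_1+a_2+a_3=0$ and $b_1+b_2+b_3=0$, the sum of the three coordinates of $\sigma_{g,h}(r,s)$ vanishes.

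Next I would cut down the number of cases to examine. The twists with $g=g_0$ or $h=g_0$, as well as $\sigma_{ii}$, are identically zero, hence trivially send $S\times S$ into $S$. The relations $\sigma_{i,i+3}(r,s)=-\sigma_{i,i+4}(s,r)$, $\sigma_{i,i+5}(r,s)=-\sigma_{i,i+2}(s,r)$ and $\sigma_{i,i+6}(r,s)=-\sigma_{i,i+1}(s,r)$ express the remaining maps, up to a sign and a swap of arguments, in terms of the three \emph{basic} twists $\sigma_{i,i+1}$, $\sigma_{i,i+2}$, $\sigma_{i,i+4}$. Since $S$ is a subspace (closed under negation), once I know $\sigma_{i,i+4}(r,s)\in S$ for all $r,s\in S$ I get $-\sigma_{i,i+4}(s,r)\in S$ by merely renaming the arguments; likewise for the other two. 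Hence it suffices to verify $\ell\circ\sigma=0$ on $S\times S$ for the three basic twists displayed in Eq.~\eqref{eq_hola2}.

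Then for each of the three I would apply $\ell$ and regroup the terms so that the constraint factors out. For instance, for $\sigma_{i,i+1}$ one gets
$$\ell\bigl(\sigma_{i,i+1}(r,s)\bigr)=-(a_2+a_3)(b_1+b_3)+a_1b_2,$$
and the substitutions $a_2+a_3=-a_1$ and $b_1+b_3=-b_2$ (valid because $r,s\in S$) collapse this to $-a_1b_2+a_1b_2=0$. The cases $\sigma_{i,i+2}$ and $\sigma_{i,i+4}$ are entirely analogous: in each, $\ell\bigl(\sigma(r,s)\bigr)$ regroups as one product of two complementary partial sums plus a single cross term, and the pair of substitutions $a_j+a_k=-a_l$, $b_j+b_k=-b_l$ makes the two contributions cancel exactly.

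Finally I would conclude that $\ell$ annihilates every $\sigma_{g,h}(r,s)$ with $r,s\in S$, so $\sigma_{g,h}(S,S)\subset S$ for all $g,h\in G$, and therefore $S^\sigma[G]$ is closed under the bracket of $W^\sigma[G]$ and is itself a generalized group algebra. This is a pure computation; the only real ``obstacle'' is the bookkeeping of the index cases, which the reduction via the skew-symmetry relations keeps down to the three displayed brackets. The pleasant point — not accidental, given that this $S$ is meant to carve out $\f g_2$ — is that in each basic case the lone cross term matches exactly the product produced by the trace-zero constraint.
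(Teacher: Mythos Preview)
Your proof is correct, and it follows the same overall skeleton as the paper's (reduce via the skew-symmetry relations to the three basic twists, then verify directly).  The execution differs: the paper fixes a basis $s_1=(0,1,-1)$, $s_2=(2,-1,-1)$ of $S$ and tabulates all twelve values $\sigma_{i,i+k}(s_a,s_b)$, writing each output back as an explicit $\bb F$-combination of $s_1$ and $s_2$; you instead use the description $S=\ker\ell$ and check the single scalar identity $\ell\bigl(\sigma_{i,i+k}(r,s)\bigr)=0$ symbolically for $r,s\in S$.  Your route is tidier (three one-line factorizations in place of twelve table entries) and incidentally avoids dividing by $2$.  The paper's approach, however, is not wasted effort: the explicit values of the bracket on the basis $\{s_1g_i,s_2g_i\}$ are reused verbatim in the Killing-form computation for $S^\sigma[G]$ in the next section, so tabulating them here serves double duty.
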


\begin{proof}
Denote by $s_1=(0,1,-1)$ and $s_2=(2,-1,-1)$, a set of generators of $S$. A simple substitution in \eqref{eq_hola2} gives
$$
\begin{array}{llllll}
\hspace{-3pt}\sigma_{i,i+1}\colon&(s_1,s_1)\mapsto \frac12(s_1-s_2)&
\hspace{-5pt}\sigma_{i,i+2}\colon&(s_1,s_1)\mapsto \frac12(s_1-s_2)&
\hspace{-5pt}\sigma_{i,i+4}\colon&(s_1,s_1)\mapsto \frac12(s_1+s_2)\\
&(s_1,s_2)\mapsto \frac32(s_1-s_2)&&(s_1,s_2)\mapsto \frac{-1}2(3s_1+s_2)&&(s_1,s_2)\mapsto \frac{-1}2(3s_1-s_2)\\
&(s_2,s_1)\mapsto \frac{-1}{2}(3s_1+s_2)&&(s_2,s_1)\mapsto \frac{1}{2}(3s_1+s_2)&&(s_2,s_1)\mapsto \frac{-3}{2} (s_1+s_2)\\
&(s_2,s_2)\mapsto \frac12(3s_1+s_2)&&(s_2,s_2)\mapsto \frac{-1}2(9s_1-s_2)&&(s_2,s_2)\mapsto \frac{-1}2(3s_1-s_2).
\end{array}
$$
Also take into account that $\sigma_{g,h}(r,r')=-\sigma_{h,g}(r',r)$ to finish the proof. 
\end{proof}

\begin{prop}\label{pr_casog2}
The generalized group  algebra $ S^\sigma[G]$ as in Lemma~\ref{re_encajecong2}, is sum of a two dimensional center, $Sg_0$ and $\f g_2\equiv S^\sigma[G^\times]$, a simple subalgebra of type $G_2$.
\end{prop}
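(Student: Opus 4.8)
The plan is to realize $\Psi^{-1}(S^\sigma[G^\times])\subset\f{so}_7(\bb F)$, which by Lemma~\ref{re_encajecong2} is a graded subalgebra, as the derivation algebra $\Der(C)$ of the Cayley algebra $C$, the classical model of the simple Lie algebra of type $G_2$. Recall that $\Psi$ of Corollary~\ref{cor_casob3} is a Lie algebra isomorphism $\f{so}_7(\bb F)\to W^\sigma[G^\times]$, so it suffices to locate $\Der(C)$ inside $\f{so}_7(\bb F)$ and to check that $\Psi$ carries it onto $S^\sigma[G^\times]$.

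First I would set up $C$ as the twisted group algebra $\bb F^\tau[\bb Z_2^3]$ of \cite{albuquerque1999quiasialgebra}, with orthonormal basis $\{e_0=1,e_1,\dots,e_7\}$ and $\deg e_i=g_i$ as in \eqref{eq: elementos de Z_2^3}, so that its space of imaginary elements $C_0=\langle e_1,\dots,e_7\rangle\cong\bb F^7$ carries exactly the grading of Corollary~\ref{cor_casob3}. Since a derivation annihilates the identity $e_0$ and is skew for the norm, $\Der(C)$ is a graded subalgebra of $\f{so}(C_0)=\f{so}_7(\bb F)$, and the induced $\bb Z_2^3$-grading is the fine grading of $\f g_2$, with $\dim\Der(C)_{g_i}=2$ for $i\in I$ and $\Der(C)_{g_0}=0$ (see \cite{draper2024twistedg2}, or the grading $\Gamma_{\f b_3}$ in \cite{draper2024gradedb3d4}).

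The heart of the matter is the equality $\Psi(\Der(C)_{g_i})=Sg_i$; equivalently, a homogeneous operator $D=a_1\varphi_{i+1,i+3}+a_2\varphi_{i+2,i+6}+a_3\varphi_{i+4,i+5}$ of degree $g_i$ lies in $\Der(C)$ if and only if $a_1+a_2+a_3=0$. I would obtain this by imposing the Leibniz rule $D(e_je_k)=D(e_j)e_k+e_jD(e_k)$ on pairs of imaginary units, where $e_je_k=\tau_{jk}e_{j*k}$ respects the grading. Because the labelling \eqref{eq: elementos de Z_2^3} and the octonion products are invariant under the cyclic shift $i\mapsto i+1$ of indices modulo $7$ — the same uniformity in $i$ already present in the formulas of Proposition~\ref{pr_d4} and Corollary~\ref{cor_casob3} — it is enough to treat one value of $i$; a short computation then yields the single relation $a_1+a_2+a_3=0$, the remaining products imposing no further constraint. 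The complementary line $a_1=a_2=a_3$ is the degree-$g_i$ part of the $7$-dimensional summand in the branching $\f{so}_7\downarrow\f g_2$, matching the count $3=2+1$ in each component.

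Granting this identification, $\Psi$ restricts to a Lie isomorphism $\Der(C)\cong S^\sigma[G^\times]$, so $S^\sigma[G^\times]$ is simple of type $G_2$, and in particular centerless. Since $\sigma_{0,i}=\sigma_{i,0}=\sigma_{0,0}=0$, the subspace $Sg_0$ is central of dimension $\dim S=2$; centerlessness of $S^\sigma[G^\times]$ then forces the center of $S^\sigma[G]$ to be exactly $Sg_0$, and $S^\sigma[G]=Sg_0\oplus S^\sigma[G^\times]$ is the decomposition into center plus simple ideal. The main obstacle is the sign bookkeeping in the octonion table when reducing the Leibniz condition to $a_1+a_2+a_3=0$: the signs $\tau_{jk}$ depend on the chosen twist, so one must fix a convention compatible with \eqref{eq: elementos de Z_2^3} before the single-case computation closes cleanly. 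Alternatively, one may prove simplicity of $S^\sigma[G^\times]$ directly from the structure constants of Lemma~\ref{re_encajecong2} and then invoke that over $\overline{\bb F}$ the only $14$-dimensional simple Lie algebra is of type $G_2$.
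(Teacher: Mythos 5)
Your proposal is correct and takes essentially the same route as the paper: both identify $S^\sigma[G^\times]$, via $\Psi$, with the derivation algebra of the octonion algebra sitting inside $\f{so}_7(\bb F)$, the classical model of $\f g_2$. The only difference is one of packaging — the paper simply matches $s_1,s_2$ with the explicit homogeneous derivations $2E_i^{\ell_i,i+2}=\varphi_{i+2,i+6}-\varphi_{i+4,i+5}$ and $2F_i^{\ell_i}=2\varphi_{i+1,i+3}-\varphi_{i+2,i+6}-\varphi_{i+4,i+5}$ cited from \cite{draper2024gradedg2}, whereas you rederive the defining condition $a_1+a_2+a_3=0$ from the Leibniz rule (and your closing alternative, nondegeneracy of the Killing form plus a dimension/rank count, is precisely the paper's Remark~\ref{re_claroqueesg2}).
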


\begin{proof}  
Note that $s_1$ and $s_2$ in the proof of the above lemma can be naturally identified with the derivations of the octonion algebra $2E_{i}^{\ell_i,i+2}$ and $2F_{i}^{\ell_i}$ from \cite[Eq.~(14)]{draper2024gradedg2}, for $\ell_i=\{i,i+1,i+3\}$,
taking into account that $2E_{i}^{\ell_i,i+2}=  \varphi_{{i+2}, {i+6}}- \varphi_{{i+4}, {i+5}}$ and $2F_{i}^{\ell_i}=2\varphi_{{i+1}, {i+3}}- \varphi_{{i+2}, {i+6}}- \varphi_{{i+4}, {i+5}}$.
\end{proof}
From now on, we will denote by $\Gamma_{\f g_2}$ the grading on the generalized Lie algebra $\f g_2:= S^\sigma[G^\times]$.

In the above  proof we have used our previous knowledge of octonions and the concrete derivations of octonions taken for instance from \cite{draper2024gradedg2}. But
 Remark~\ref{re_claroqueesg2} below provides an alternative proof to this proposition. This means that we have a completely independent construction of $\f g_2$ without using derivations of octonions (not even for the proof), whose bracket is, besides, very easy to handle. 
 
 Our proof that   $\f g_2$ is a Lie algebra over $\bb Z_2^3$   is also  independent  of   \cite{draper2024twistedg2}, although an explicit correspondence can be given by  $(a_1s_1+a_2s_2)g_i\mapsto 2(a_1,a_2)g_i$.  In any case, seeing $\f g_2$ as a subalgebra of  $ W^\sigma[G]$ in such a natural way with $S=\langle(1,1,1)\rangle^\perp$ 
 has made the twist have a much more friendly expression than that of \cite[Theorem~1]{draper2024twistedg2}.

\subsection{Lie algebras over $G$ and the Killing form}\label{se_kappa}  

We will show how well adapted this structure is to the framework of Lie theory, 
providing a completely natural description of the Killing form in terms of the generalized Lie algebra.
In particular, this facilitates the choice of orthonormal bases.

\begin{prop}
Let $\kappa\colon \la\times\la\to\la$ denote the Killing form of a Lie algebra $\la$, that is, $\kappa(x,y)=\mathrm{tr}(\ad(x)\ad(y))$, for $\ad$ the adjoint operator.
As usual consider the group $G=\bb Z_2^3$.
\begin{enumerate}
\item If $ \la=V^\sigma[G]$ is the Lie algebra in Proposition~\ref{pr_d4}, for any $i,j\in I$, $r,s\in V=\bb F^4$,
 $$\kappa(rg_i,sg_j)=-12\langle r,s\rangle\delta_{ij}.$$ 
\item If $ \la=W^\sigma[G]$ is the Lie algebra in Corollary~\ref{cor_casob3}, $i,j\in I$, $r,s\in W=\bb F^3$, then 
\vspace{-2pt}
$$\kappa(rg_i,sg_j)=-10\langle r,s\rangle\delta_{ij}.$$
\item If $ \la=S^\sigma[G]$ is the Lie algebra in Proposition~\ref{pr_casog2}, $i,j\in I$, $r,s\in S= \langle(1,1,1)\rangle^\perp\le\bb F^3$, then
\vspace{-6pt}
$$
\kappa(rg_i,sg_j)=-8\langle r,s\rangle\delta_{ij}.
$$
\end{enumerate}
In the three cases, the neutral component coincides with the radical of $\kappa$.
\end{prop}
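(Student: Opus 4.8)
The plan is to reduce each statement to the Killing form of the simple ideal and then compute the latter through the trace form of the defining representation.

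First I would use the structural description already obtained: in each case $\la=\f z(\la)\oplus\f s$, where $\f z(\la)=Vg_0$ (resp. $Wg_0$, $Sg_0$) is the central neutral component and $\f s=V^\sigma[G^\times]$ is a simple ideal isomorphic to $\f{so}_8(\bb F)$, $\f{so}_7(\bb F)$ or $\f g_2$, by Proposition~\ref{pr_d4}, Corollary~\ref{cor_casob3} and Proposition~\ref{pr_casog2}. Since the center acts trivially under $\ad$, for $x=x_0+x_1$, $y=y_0+y_1$ with $x_0,y_0\in\f z(\la)$ and $x_1,y_1\in\f s$ one has $\ad(x)\ad(y)=\ad_{\f s}(x_1)\ad_{\f s}(y_1)$ on $\f s$ and $0$ on $\f z(\la)$, whence $\kappa(x,y)=\kappa_{\f s}(x_1,y_1)$. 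In particular $\kappa(rg_0,\cdot)=0$, so the neutral component lies in the radical, and for $i,j\in I$ it suffices to evaluate the Killing form of the simple algebra $\f s$ on $rg_i,sg_j$. Once the formulas below are established, the Gram matrix on $\oplus_{i\in I}Vg_i$ is block diagonal with nondegenerate blocks $-c\langle\cdot,\cdot\rangle$ ($c=12,10,8$), so $\kappa|_{\f s}$ is nondegenerate and the radical is exactly the neutral component; this needs $c\neq0$ and, in case (3), the restriction of $\langle\cdot,\cdot\rangle$ to $S$ to be nondegenerate, i.e. $\langle(1,1,1),(1,1,1)\rangle=3\neq0$, explaining the hypothesis $\mathrm{char}\,\bb F\neq2,3$.

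For the orthogonal cases (1) and (2) I would invoke the classical identity $\kappa_{\f{so}_n}(X,Y)=(n-2)\,\mathrm{tr}(XY)$ together with the elementary trace formula $\mathrm{tr}(\varphi_{x,y}\varphi_{u,v})=2(\langle x,v\rangle\langle y,u\rangle-\langle x,u\rangle\langle y,v\rangle)$, which follows directly from $\varphi_{x,y}(z)=\langle x,z\rangle y-\langle y,z\rangle x$. The key combinatorial point is that the basis $\mathcal B_i$ (resp. $\mathcal B'_i$) consists of operators $\varphi_{ab}$ whose index pairs are pairwise disjoint and run over orthonormal vectors, so $\mathrm{tr}(\varphi_\alpha\varphi_\beta)=-2\delta_{\alpha\beta}$ within $\mathcal B_i$; hence $\mathrm{tr}(fg)=-2\langle r,s\rangle$ when $\deg f=\deg g=g_i$. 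For $i\neq j$ the product of a degree-$g_i$ and a degree-$g_j$ operator shifts the grading by $g_i+g_j\neq0$, so it has no diagonal blocks and its trace vanishes, producing the factor $\delta_{ij}$. Multiplying by $n-2$ gives $-12\langle r,s\rangle\delta_{ij}$ for $\f{so}_8$ and $-10\langle r,s\rangle\delta_{ij}$ for $\f{so}_7$.

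The only genuinely delicate point is the constant in case (3), since the Killing form of a subalgebra is not the restriction of the ambient one. As $\f g_2$ is simple, $\kappa_{\f g_2}$ is a scalar multiple of the restriction of the trace form $\mathrm{tr}_7$, and I would fix this scalar through the $\f g_2$-module decomposition $\f{so}_7=\Lambda^2V_7=\f g_2\oplus V_7$ (here $V_7$ is the $7$-dimensional module and $21=14+7$). Applying the adjoint-restriction identity $\kappa_{\f{so}_7}|_{\f g_2}=\kappa_{\f g_2}+\mathrm{tr}_{V_7}$ and using $\kappa_{\f{so}_7}=5\,\mathrm{tr}_7$ yields $\kappa_{\f g_2}=4\,\mathrm{tr}_7$; equivalently, the $7$-dimensional representation of $\f g_2$ has Dynkin index $2$, so $\kappa_{\f g_2}=\frac{2h^\vee}{2}\,\mathrm{tr}_7=4\,\mathrm{tr}_7$ with $h^\vee=4$. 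Since under the embedding of Proposition~\ref{pr_casog2} an element $rg_i$ with $r=(a_1,a_2,a_3)\in S$ corresponds to $a_1\varphi_{i+1,i+3}+a_2\varphi_{i+2,i+6}+a_3\varphi_{i+4,i+5}\in\f{so}_7$, the same trace computation restricted to $S$ gives $\mathrm{tr}_7(fg)=-2\langle r,s\rangle\delta_{ij}$, and therefore $\kappa_{\f g_2}(rg_i,sg_j)=4\cdot(-2)\langle r,s\rangle\delta_{ij}=-8\langle r,s\rangle\delta_{ij}$. The main obstacle is precisely this determination of the constant $4$; everything else is a short invariance-plus-gradedness argument. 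An alternative, more computational route would bypass the representation theory by evaluating $\mathrm{tr}(\ad(rg_i)^2)$ directly from the explicit twist of Lemma~\ref{re_encajecong2}, but this is considerably more laborious.
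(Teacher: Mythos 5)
Your proposal is correct, but it takes a genuinely different route from the paper. The paper stays entirely inside the generalized group algebra: it first notes that distinct homogeneous components are $\kappa$-orthogonal (giving the factor $\delta_{ij}$), then computes $\mathrm{tr}\big(\ad(rg_i)\ad(sg_i)\big)$ by brute force from the twist $\sigma$, summing the projections $\pi_k(F(e_k'g_l))$ component by component: for $\f{so}_8$ every component $l\ne 0,i$ contributes $-2\langle r,s\rangle$, for $\f{so}_7$ the six contributions are unequal but sum to $-10\langle r,s\rangle$, and for $\f g_2$ the form is evaluated directly on the generators $s_1,s_2$ (getting $-16$, $0$, $-48$) and extended by bilinearity. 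You instead transport everything through the isomorphism $\Psi$ and invoke classical identities: $\kappa_{\f{so}_n}=(n-2)\,\mathrm{tr}$, the trace formula for the operators $\varphi_{x,y}$ combined with the pairwise disjointness of the index pairs in $\mathcal B_i$, and, for the delicate $\f g_2$ constant, the branching $\f{so}_7\cong\f g_2\oplus V_7$ as $\f g_2$-modules, which yields $\kappa_{\f g_2}=4\,\mathrm{tr}_7$; your Dynkin-index cross-check (index $2$ for the $7$-dimensional module, $h^\vee=4$) is also correct. Your route is shorter and explains the constants conceptually ($-12=6\cdot(-2)$, $-10=5\cdot(-2)$, $-8=4\cdot(-2)$), and you are in fact more careful than the paper about when ``radical $=$ neutral component'' actually holds: the constants must be nonzero and $\langle\cdot,\cdot\rangle|_S$ must be nondegenerate, which forces $\mathrm{char}\,\bb F\ne 3$ (and note that your case (2) constant $10$ also vanishes in characteristic $5$). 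What your route costs is self-containedness: it leans on the identification with the classical orthogonal algebras and on representation-theoretic facts ($\Lambda^2V_7=\f g_2\oplus V_7$, Dynkin indices) stated in characteristic-zero language, whereas the paper's internal computation uses nothing but the twist, works verbatim in the paper's stated generality, and fits its declared philosophy of avoiding octonions, their derivations, and classical machinery. If you want your argument in positive characteristic, replace the Dynkin-index remark by the explicit $\f g_2$-module decomposition of $\f{so}_7$, which does hold for $\mathrm{char}\,\bb F\ne 2,3$.
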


(This example makes it very clear that the Killing form of a subalgebra is not the restriction of the Killing form of the algebra to the subalgebra.)

\begin{proof}
Recall that if  $\Gamma: \la = \bigoplus_{g\in G}\la_g$ is a $G$-grading on a Lie algebra, then $\kappa(\la_g,\la_h)=0$ if $g\ne-h$. As now $G=\bb Z_2^3$, any two different homogeneous components are orthogonal for $\kappa$. Then assume that $i=j$, and let us compute the trace of $F=\ad(rg_i)\ad(sg_i)$, for $r,s\in \bb F^4$.
Let us denote by  $\{e'_k:k=1,2,3,4\}$   the canonical basis of $V$,
and by $\pi_k\colon Vg_l\to\bb F$ the projection
$\pi_k((a_1,a_2,a_3,a_4)g_l)=a_k$.  (We avoid the name $\pi_k^l$  in order not to complicate the notation.)
Since $F(Vg_l)\subset Vg_l$, 
then
$$\kappa(rg_i,sg_j)=\sum_{l=0\dots7}\sum_{k=1\dots4}\pi_k(F(e_k'g_l)).$$
 For $l\in\{0,i\}$, $F(Vg_l)=0$. 
For $l=i+1$,
$
F(tg_{i+1})=-\sigma_{i,i+4}(\sigma_{i,i+1}(s,t),r)g_{i+1}.
$
So we can compute, for  $r=(a_1,a_2,a_3,a_4)$ and $s= (b_1,b_2,b_3,b_4)$, 
$$
\begin{array}{l}
\pi_1(F(e_1'g_{i+1}))=\pi_3(F(e_3'g_{i+1}))=-a_2b_2-a_3b_3,\quad\\
\pi_2(F(e_2'g_{i+1}))=\pi_4(F(e_4'g_{i+1}))=-a_1b_1-a_4b_4,
\end{array}
$$
which gives $\sum_{k=1\dots4}\pi_k(F(e_k'g_{i+1}))=-2\langle r,s\rangle$. Similarly we proceed with any $l\ne 0,i$ to obtain 
$\sum_{k=1\dots4}\pi_k(F(e_k'g_l))=-2\langle r,s\rangle$. Thus $\kappa(rg_i,sg_i)=-12\langle r,s\rangle$, finishing the case $[\la,\la]\cong\f{so}_8(\bb F)$.

For the second case, we have to be careful, because the Killing form does not restrict well to subalgebras. 
Besides, although we can argue quite similarly,   now not all the partial sums are equal. What happens is,
for $F=\ad(rg_i)\ad(sg_i)$,  
$r=(a_1,a_2,a_3)$, $s=(b_1,b_2,b_3)$,
$$
\begin{array}{l}
\sum_{k=1\dots4}\pi_k(F(e_k'g_{i+1}))=\sum_{k=1\dots4}\pi_k(F(e_k'g_{i+3}))=-a_1b_1-2a_2b_2-2a_3b_3,\\
\sum_{k=1\dots4}\pi_k(F(e_k'g_{i+2}))=\sum_{k=1\dots4}\pi_k(F(e_k'g_{i+6}))=-2a_1b_1-a_2b_2-2a_3b_3,\\
\sum_{k=1\dots4}\pi_k(F(e_k'g_{i+4}))=\sum_{k=1\dots4}\pi_k(F(e_k'g_{i+5}))=-2a_1b_1-2a_2b_2-a_3b_3,
\end{array}
$$
which gives trace of $F$ equal to $-10\langle r,s\rangle$.

Finally, for the algebra $  S^\sigma[G]$, it is enough to check that $\kappa(s_1g_i,s_1g_i)=-16$, $\kappa(s_1g_i,s_2g_i)=0$ and $\kappa(s_1g_i,s_2g_i)=-48$, as then the bilinearity gives the result. 
For instance,  the map $F=\ad(s_1g_i)\ad(s_1g_i)$ can be tediously computed:
$$
\begin{array}{lll}
s_1g_{i+1}\mapsto (-s_1-s_2)g_{i+1},\qquad&s_1g_{i+2}\mapsto -s_1g_{i+2},\ \qquad&s_1g_{i+4}\mapsto -s_1g_{i+4},\\
s_2g_{i+1}\mapsto (-3s_1-3s_2)g_{i+1},&s_2g_{i+2}\mapsto -s_2g_{i+2},&s_2g_{i+4}\mapsto -s_2g_{i+4},\\
s_1g_{i+3}\mapsto (-s_1+s_2)g_{i+3},&s_1g_{i+6}\mapsto -s_1g_{i+6},&s_1g_{i+5}\mapsto -s_1g_{i+5},\\
s_2g_{i+3}\mapsto (3s_1-3s_2)g_{i+3},&s_2g_{i+6}\mapsto -s_2g_{i+6},&s_2g_{i+5}\mapsto -s_2g_{i+5},\\
\end{array}
$$ 
which gives $\mathrm{tr}(F)=-1-3-1-3-1-1-1-1-1-1-1-1=-16$. Proceed similarly with the other two cases.
\end{proof}

\begin{remark}\label{re_claroqueesg2}
The previous proposition implies the semisimplicity of the derived algebra of $  S^\sigma[G]$, since the Killing form is nondegenerate (in fact, it is negative definite in case $\bb F=\bb R$). 
It is not very difficult to conclude that it has type $G_2$, without any other consideration on derivations of octonions. Indeed, the only other semisimple Lie algebra of dimension 14 has type $A_2\oplus2A_1$, by dimension count (the only simple Lie algebras with dimensions less than 14 have dimension 3, 8 and 10), which cannot be contained in an algebra of type $B_3$, arguing about the rank.  
To summarize, we have an alternative proof of Proposition~\ref{pr_casog2} which does not need to use \cite{draper2024gradedg2}.
\end{remark}


\subsection{Generalized group algebras and irreducible representations}\label{se_rep}

It is well-known, if $\bb F$ is  an algebraically closed field  of characteristic zero, that $\f{so}_8(\bb F)$ has 4 basic irreducible representations, and any other irreducible representation lives as a submodule of the tensor product of copies of the basic ones. 
This makes it important to describe these 4 representations in an easy way. (Material about representations on Lie algebras can be consulted, for instance, in \cite{humphreys1972intro_to_lie_and_rep}.)
 Besides the adjoint module, the other 3 basic modules have all dimension 8 and are   the natural one, and the two half-spin modules. Our next purpose  is to describe these modules from the point of view of the generalized Lie algebra. (The adjoint module is of course not necessary.) They are surprisingly well adapted to our description in terms of the group $\bb Z_2^3$.
 
 \begin{prop}\label{pr_espines}
The 3 non-equivalent irreducible representations of dimension 8 of the generalized Lie algebra $[\la,\la]=V^\sigma[G^\times]$ are given by
$\rho_k\colon V^\sigma[G^\times]\to\f{gl}_8(\bb F)$, $k=1,2,3$, for
$$
\begin{array}{l}
\rho_1((a,b,c,d)g_i)=\delta(i)_{(a,b,c,d)}\\
\rho_2((a,b,c,d)g_i)=\delta(i)_{(a,b,c,d)-\frac{a+b+c-d}2(1,1,1,-1)}\\
\rho_3((a,b,c,d)g_i)=\delta(i)_{(a,b,c,d)-\frac{a+b+c+d}2(1,1,1,1)}
\end{array}
$$
where   $\delta(i)_{(a,b,c,d)}\colon \bb F^8\to  \bb F^8$ denotes the linear map:
$$
\begin{array}{llll}
e_{i+1}\mapsto ae_{i+3},\qquad &e_{i+2}\mapsto be_{i+6},\qquad &e_{i+4}\mapsto ce_{i+5},\qquad &e_{0}\mapsto de_{i},\\
e_{i+3}\mapsto -ae_{i+1},\qquad &e_{i+6}\mapsto -be_{i+2},\qquad &e_{i+5}\mapsto -ce_{i+4},\qquad &
e_{i}\mapsto -de_{0}.
\end{array}
$$
 \end{prop}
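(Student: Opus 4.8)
The plan is to read $\rho_1$ off as the natural module and to obtain $\rho_2,\rho_3$ from it by precomposition with two graded automorphisms realising triality. First I would compare the explicit operator $\delta(i)_{(a,b,c,d)}$ with the formula $\varphi_{x,y}(z)=\ang{x,z}y-\ang{y,z}x$: one checks at once that $\delta(i)_{(a,b,c,d)}=a\varphi_{i+1,i+3}+b\varphi_{i+2,i+6}+c\varphi_{i+4,i+5}+d\varphi_{0,i}$, which is exactly $\Psi^{-1}((a,b,c,d)g_i)$ for the isomorphism $\Psi$ of Proposition~\ref{pr_d4}. Hence $\rho_1$ is nothing but $\Psi^{-1}$ followed by the inclusion $\f{so}_8(\bb F)\hookrightarrow\f{gl}_8(\bb F)$, i.e. the natural $8$-dimensional representation; in particular $\rho_1$ is a representation and it is irreducible.

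Next I would package the shifts appearing in $\rho_2,\rho_3$ as linear maps $T_k\colon V\to V$, namely $T_2(a,b,c,d)=(a,b,c,d)-\frac{a+b+c-d}{2}(1,1,1,-1)$ and $T_3(a,b,c,d)=(a,b,c,d)-\frac{a+b+c+d}{2}(1,1,1,1)$. A direct reading shows these are precisely the orthogonal reflections of $V=\bb F^4$ across the hyperplanes orthogonal to $w_2=(1,1,1,-1)$ and $w_3=(1,1,1,1)$; both are involutions and $T_2T_3$ has order $3$, so $\langle T_2,T_3\rangle\cong S_3$ (the triality group). Defining the graded linear maps $\theta_k\colon V^\sigma[G^\times]\to V^\sigma[G^\times]$, $\theta_k(vg_i)=(T_kv)g_i$, one has $\rho_k=\rho_1\circ\theta_k$ for $k=2,3$ by construction.

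The crux is to prove $\theta_2,\theta_3\in\Aut(V^\sigma[G^\times])$. Since each $\theta_k$ respects the grading, this amounts to the equivariance identities $\sigma_{i,j}(T_kr,T_ks)=T_k\,\sigma_{i,j}(r,s)$, and by the skew-symmetry relations together with the invariance of $\sigma$ under the index shift $i\mapsto i+1$, it is enough to verify them for $j\in\{i+1,i+2,i+4\}$ using the explicit twist~\eqref{eq_hola}. This finite (if tedious) bilinear check is the main obstacle; everything else is formal. Once it is done, $\rho_2,\rho_3$ are representations, being composites of Lie homomorphisms, and each is irreducible because precomposition with the surjective automorphism $\theta_k$ leaves unchanged the family of $\rho_1$-invariant subspaces of $\bb F^8$, so $\rho_k$ inherits irreducibility from $\rho_1$.

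It remains to separate the three modules. Here I would use that each homogeneous component $\f{so}_8(\bb F)_{g_i}\cong Vg_i$ is a $4$-dimensional abelian algebra of semisimple elements, hence (the rank being $4$) a Cartan subalgebra $\ha$. Diagonalising over the algebraically closed field, the weights of $\rho_1$ on $\bb F^8$ are $\pm\epsilon_1,\dots,\pm\epsilon_4$, the coordinate functionals of $\ha\cong V$ up to the scalar $\sqrt{-1}$, i.e. the natural weight system; the weights of $\rho_k$ are then their images $\pm(\epsilon_\ell\circ T_k)$. Since $T_2$ and $T_3$ carry each $\epsilon_\ell$ to $\frac12(\pm\epsilon_1\pm\epsilon_2\pm\epsilon_3\pm\epsilon_4)$ with an even, respectively odd, number of minus signs, $\rho_2$ and $\rho_3$ realise the two half-spin weight systems, which are distinct from each other and from the natural one. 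Thus $\rho_1,\rho_2,\rho_3$ are pairwise non-equivalent, and since $\f{so}_8(\bb F)$ has exactly three inequivalent $8$-dimensional irreducible modules, these are all of them.
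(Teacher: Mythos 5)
Your proof is correct, but it takes a genuinely different route from the paper's. The paper proves the proposition by endowing $\bb F^8$ with the octonion product \eqref{eq_octo} and invoking the principle of local triality: it decomposes each $\varphi_{x,y}$ according to \eqref{eq_descomp}, uses alternativity to obtain \eqref{eq_trialdad2}, and realizes $\rho_2,\rho_3$ as the triality companions $U\mapsto U'$, $U\mapsto U''$ of the natural representation, so the identification with the half-spin modules comes for free, at the cost of importing octonions, their derivations and results from \cite{schafer2017intro_to_nonassociative}. You instead stay entirely inside the generalized group algebra: you write $\rho_k=\rho_1\circ\theta_k$ with $\theta_k(vg_i)=(T_kv)g_i$ for the reflections $T_2=s_{(1,1,1,-1)}$, $T_3=s_{(1,1,1,1)}$ of $V$, reduce automorphy of $\theta_k$ to the equivariance identities $\sigma_{i,j}(T_kr,T_ks)=T_k\,\sigma_{i,j}(r,s)$, get irreducibility formally, and separate the three modules by their weight systems relative to the Cartan subalgebra $Vg_i$ (natural weights $\pm\epsilon_\ell$ versus the two half-spin systems $\frac12(\pm\epsilon_1\pm\epsilon_2\pm\epsilon_3\pm\epsilon_4)$ with an even, respectively odd, number of minus signs). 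This is arguably closer to the paper's stated program of avoiding octonions, and it makes the pairwise non-equivalence explicit, which the paper leaves implicit; note that the remark following the proposition in the paper records precisely your key identity for $s_{u_1},s_{u_2}$, but there it is \emph{deduced from} the proposition, whereas for you it is the engine of the proof, so you cannot quote it without circularity. Consequently the one outstanding obligation is to actually carry out that finite bilinear verification: your reduction to the three cases $j\in\{i+1,i+2,i+4\}$ (by shift-invariance of \eqref{eq_hola} and skew-symmetry) is sound, and the identities do hold, but they must be written out for the proof to be complete. You should also add one line justifying that $Vg_i$ is a Cartan subalgebra: each element of $Vg_i$ acts block-diagonally by $2\times2$ skew blocks on the planes $\langle e_{i+1},e_{i+3}\rangle$, $\langle e_{i+2},e_{i+6}\rangle$, $\langle e_{i+4},e_{i+5}\rangle$, $\langle e_{0},e_{i}\rangle$, hence is semisimple over the algebraically closed field, and a $4$-dimensional toral subalgebra is Cartan because the rank of $\f{so}_8(\bb F)$ is $4$.
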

 
 \begin{proof}
The   natural representation $\rho_1$ is clear, due to 
$$
\varphi_{{i+1}, {i+3}}=\delta(i)_{(1,0,0,0)}, \quad
\varphi_{{i+2}, {i+6}}=\delta(i)_{(0,1,0,0)},  \quad
\varphi_{{i+4}, {i+5}}=\delta(i)_{(0,0,1,0)},  \quad
\varphi_{0, i}=\delta(i)_{(0,0,0,1)}.
$$

For describing the two other representations, it is convenient to think of $\bb F^8$ as an algebra $\mathcal O=\bb F^8$ with the product where $e_0=1$ is the unity, $e_i^2=-1$  for all $i\in I$ and
\begin{equation}\label{eq_octo}
e_ie_{i+1}=-e_{i+1}e_{i}=e_{i+3},
\end{equation}
and all the 
cyclic permutations of this identity hold, always taking the sum modulo 7. (This is the Cayley algebra, or octonion algebra, if the field has characteristic different from 2 and 3.)
According to the   principle of local triality (see, for instance, \cite[Theorem~3.31]{schafer2017intro_to_nonassociative}),   for every  $U\in\f{so}_8(\bb F)$
there are unique $U',U''\in \f{so}_8(\bb F)$ satisfying 
\begin{equation*}\label{eq_trialdad1}
U(xy)=U'(x)y+xU''(y) \qquad \textrm{ for all }x,y\in \mathcal O.
\end{equation*}
Precisely the representations $\rho_2,\rho_3\colon \f{so}_8(\bb F)\to \f{gl}_8(\bb F)$ come from assigning $\rho_2(U)=U'$ and $\rho_3(U)=U''$. Thus, it is convenient to recall the proof of this principle,
based on the well-known fact  that
\begin{equation}\label{eq_descomp}
\f{so}_8(\bb F)=\f{der}(\mathcal O)\oplus L_{\mathcal O_0}\oplus R_{\mathcal O_0},
\end{equation}
 for $\f{der}(\mathcal O)$ the derivation algebra, $\mathcal O_0=\langle\{e_i:i\in I\}\rangle$,
 and where $L_x,R_x\colon\mathcal O\to\mathcal O$ denote the left and right multiplication operators  $L_x(y)=xy$ and $R_x(y)=yx.$ 
Now, if $d\in\f{der}(\mathcal O)$, that is, $d(xy)=d(x)y+xd(y)$, this means that $d=d'=d''$. 
On the other hand, the alternativity of the algebra $\mathcal O$ (that is, $x^2y=x(xy)$ and $yx^2=(yx)x$) gives, for any $x\in\mathcal O_0$,
\begin{equation}\label{eq_trialdad2}
(L_x)'=R_x+L_x,\qquad (L_x)''=-L_x,\qquad
(R_x)'=-R_x , \qquad(R_x)''=R_x+L_x. 
\end{equation}
In particular all this can be applied to our skew-symmetric maps $\varphi_{x,y}$. This requires of
computing  its decomposition according to Eq.~\eqref{eq_descomp}. Once we check  
that, for any $x,y\in\mathcal O_0$, 
$$
 \varphi_{x,y}+\frac1{12}\big(R_{[x,y]}-L_{[x,y]} \big)=\frac16\big([L_x,L_y]+[L_x,R_y]+[R_x,R_y]\big)
\in\f{der}(\mathcal O)
$$
(the derivations on the alternative algebra $\mathcal O$ are described in  \cite{schafer2017intro_to_nonassociative}),   and 
 $$
\varphi_{1,x}=\frac12(R_x+L_x),
$$
then Eq.~\eqref{eq_trialdad2} gives immediately  
\begin{equation}\label{eq_accionessemiespin}
\begin{array}{ll}
 (\varphi_{x,y})'=\varphi_{x,y}+\frac14R_{[x,y]},\qquad\qquad& (\varphi_{1,x})'=\frac12L_x=\varphi_{1,x}-\frac12R_x,\\
 (\varphi_{x,y})''=\varphi_{x,y}-\frac14L_{[x,y]},& (\varphi_{1,x})''=\frac12R_x=\varphi_{1,x}-\frac12L_x.\\
\end{array}
\end{equation}
According to Eq.~\eqref{eq_octo}, 
$
{[}e_{i+1},e_{i+3}]=[e_{i+2},e_{i+6}]=[e_{i+4},e_{i+5}]=2e_{i},
$
and
$$
L_{e_i}=\delta(i)_{(1,1,1,1)}, \qquad R_{e_i}=\delta(i)_{(-1,-1,-1,1)}. 
$$
The only thing left to do is to put this together with Eq.~\eqref{eq_accionessemiespin} to get
$\rho_2((a,b,c,d)g_i)=\delta(i)_{(a,b,c,d)}+\frac{a+b+c-d}2\delta(i)_{(-1,-1,-1,1)}$
and 
$\rho_3((a,b,c,d)g_i)=\delta(i)_{(a,b,c,d)}-\frac{a+b+c+d}2\delta(i)_{(1,1,1,1)}$. 
More familiar expressions follows immediately,
$$
\begin{array}{l}
\rho_2((a,b,c,d)g_i)=  \delta(i)_{\big( \frac{a-b-c+d}2,\frac{-a+b-c+d}2,\frac{-a-b+c+d}2,\frac{a+b+c+d}2 \big)},\\
\rho_3((a,b,c,d)g_i)=  \delta(i)_{\big( \frac{a-b-c-d}2,\frac{-a+b-c-d}2,\frac{-a-b+c-d}2,\frac{-a-b-c+d}2 \big) } .
\end{array}
$$
 \end{proof}
 
 In particular, all the (finite) irreducible modules for $\f{so}_8(\bb F)$ admit $G$-gradings compatible with the $G$-grading on the Lie algebra. This is well-known from \cite{EK_Gradedmodules_compatibles}, 
 but Proposition~\ref{pr_espines} takes advantage of it to provide concrete -and again, very easy- expressions of these actions.
 
 \begin{remark}
 Note that, for $u_1=(1,1,1,-1)$ and $u_2=(1,1,1,1)$, we can write $\rho_1(vg_i)=\delta(i)_{v}$, $\rho_2(vg_i)=\delta(i)_{s_{u_1}(v)}$, 
 and $\rho_3(vg_i)=\delta(i)_{s_{u_2}(v)}$, for any $v\in V$, where $s_u(v)=v-2\frac{\langle {v,u}\rangle }{\langle {u,u}\rangle }u$ denotes the (order 2) reflection
  through the hyperplane $u^\perp$.
  The \emph{triality automorphism} is  an order 3 automorphism of $\f{so}_8(\bb F)$ which permutes the three inequivalent representations. It can be obtained by composing these order 2 automorphisms of $V^\sigma[G]$: 
  $$
  vg_i\mapsto s_{u_1}(v)g_i,\qquad vg_i\mapsto s_{u_2}(v)g_i,
  $$
  which translate the automorphisms of $\f{so}_8(\bb F)$ given by
  $$
  U\mapsto U',\qquad U\mapsto U''.
  $$
   (Note that, $\sigma_{ij}(s_{u_1}(v),s_{u_1}(v'))=s_{u_1}(\sigma_{ij}(v,v'))$ for all $i,j\in I$, $v,v'\in V$, which is the condition for $vg_i\mapsto s_{u_1}(v)g_i$ to define an automorphism. Similarly occurs for $u_2$, but it is not a general fact for any $u$.)
  Hence,  the triality automorphism 
  $\theta\colon \f{so}_8(\bb F)\to \f{so}_8(\bb F)$, $\theta(U')=U''$ for any $U\in \f{so}_8(\bb F)$,
  can be described in terms of the Lie algebra over $\bb Z_2^3$ as $\theta(vg_i)=s_{u_2}s_{u_1}(v) g_i$. 
  In other words, in column notation, 
  $$
  \theta(vg_i)=
  \left(\frac12\left(
\tiny\begin{array}{cccc}
 1 & -1 & -1 & 1 \\
 -1 & 1 & -1 & 1 \\
 -1 & -1 & 1 & 1 \\
 -1 & -1 & -1 & -1 \\
\end{array}
\right)v\right)g_i.
$$
Remarkably, the fixed subalgebra   $\mathrm{Fix}(\theta)=\{(a,b,c,d)g_i:i\in I,a+b+c=d=0\}=S[G^\times]$ coincides, as expected, with $\f{g}_2$.
 \end{remark}
 \smallskip

The case we are interested in highlighting is $\f g_2$, 
 with 2 basic irreducible representations: the adjoint representation and the natural representation, of dimension 7. 
  It can be extracted from the natural representation of $\f{so}_7(\bb F)\equiv W^\sigma[G^\times]$, which is the own $\bb F^7=\langle \{e_i:i\in I\}\rangle $. Using the notations in Proposition~\ref{pr_espines}, the irreducible representations of dimension 7 for $W^\sigma[G^\times]$ and $S^\sigma[G^\times]$ are given, respectively, by
$$
\begin{array}{ll}
\rho_{\f b_3}\colon W^\sigma[G^\times]\to\f{gl}_7(\bb F), &\rho_{\f b_3}((a,b,c)g_i)=\delta(i)_{(a,b,c,0)}\vert_{\bb F^7}, \\
\rho_{\f g_2}\colon S^\sigma[G^\times]\to\f{gl}_7(\bb F), &\rho_{\f g_2}((a,b,c)g_i)=\delta(i)_{(a,b,c,0)}\vert_{\bb F^7}.
\end{array}
$$
 The action is easily written in terms of the basis $B_i=\{e_{i},e_{i+1},e_{i+2}, e_{i+3},e_{i+4},e_{i+5},e_{i+6}\}$ of $\bb F$.
 The coordinates of a vector in this basis will be denoted with the subindex $_{B_i}$.
\begin{cor}
The   irreducible representation  
 $\rho_{\f g_2}\colon S^\sigma[G^\times]\to\f{gl}_7(\bb F)$ can be described by
 $$
 \begin{array}{l}
 \rho_{\f g_2}(s_1g_i): 
 (\alpha_{0},\alpha_{1},\alpha_{2},\alpha_{3},\alpha_{4},\alpha_{5},\alpha_{6})_{B_i}\mapsto
(0,0,-\alpha_{6},0,\alpha_{5},-\alpha_{4},\alpha_{2})_{B_i},
 \\
 \rho_{\f g_2}(s_2g_i): 
 (\alpha_{0},\alpha_{1},\alpha_{2},\alpha_{3},\alpha_{4},\alpha_{5},\alpha_{6})_{B_i}\mapsto
(0,-2\alpha_{3},\alpha_{6},2\alpha_{1},\alpha_{5},-\alpha_{4},-\alpha_{2})_{B_i}.
 \end{array}
 $$
 \end{cor}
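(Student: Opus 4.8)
The plan is to specialize Proposition~\ref{pr_espines} to $S^\sigma[G^\times]\le W^\sigma[G^\times]$ and translate the abstract map $\delta(i)_{(a,b,c,0)}$ into explicit coordinate formulas on the basis $B_i$. Recall from Lemma~\ref{re_encajecong2} that $s_1=(0,1,-1)$ and $s_2=(2,-1,-1)$ as elements of $W=\bb F^3$, so via the inclusion $W\hookrightarrow V=\bb F^4$ sending $(a,b,c)\mapsto(a,b,c,0)$ we have $s_1\equiv(0,1,-1,0)$ and $s_2\equiv(2,-1,-1,0)$. The representation $\rho_{\f g_2}$ is by definition $\rho_{\f g_2}((a,b,c)g_i)=\delta(i)_{(a,b,c,0)}\vert_{\bb F^7}$, so the whole computation reduces to writing down $\delta(i)_{(0,1,-1,0)}$ and $\delta(i)_{(2,-1,-1,0)}$ restricted to $\bb F^7=\langle e_i,\dots,e_{i+6}\rangle$ in terms of the coordinates $(\alpha_0,\dots,\alpha_6)_{B_i}$, where the convention is $e_{i+k}$ carries coordinate $\alpha_k$ (with $e_i$ getting $\alpha_0$).

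First I would unwind the definition of $\delta(i)_{(a,b,c,d)}$ from Proposition~\ref{pr_espines}: it sends $e_{i+1}\mapsto ae_{i+3}$, $e_{i+3}\mapsto -ae_{i+1}$, $e_{i+2}\mapsto be_{i+6}$, $e_{i+6}\mapsto -be_{i+2}$, $e_{i+4}\mapsto ce_{i+5}$, $e_{i+5}\mapsto -ce_{i+4}$, $e_0\mapsto de_i$, $e_i\mapsto -de_0$. Since we restrict to $\bb F^7$ (which does not contain $e_0$), the $d$-coordinate only survives through $e_i\mapsto -de_0$, but $-de_0\notin\bb F^7$; however for $s_1,s_2$ we have $d=0$, so $e_i$ is simply killed and the restriction is genuinely an endomorphism of $\bb F^7$. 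For $s_1$ (i.e.\ $a=0,b=1,c=-1$) this gives $e_{i+2}\mapsto e_{i+6}$, $e_{i+6}\mapsto -e_{i+2}$, $e_{i+4}\mapsto -e_{i+5}$, $e_{i+5}\mapsto e_{i+4}$, and $e_i,e_{i+1},e_{i+3}$ all mapping to zero. Reading off coordinates, $\alpha_2$ (coefficient of $e_{i+2}$) contributes to the $e_{i+6}$-slot and $\alpha_6$ contributes negatively to the $e_{i+2}$-slot, etc., which should reproduce $(0,0,-\alpha_6,0,\alpha_5,-\alpha_4,\alpha_2)_{B_i}$. For $s_2$ (i.e.\ $a=2,b=-1,c=-1$) one gets additionally $e_{i+1}\mapsto 2e_{i+3}$ and $e_{i+3}\mapsto -2e_{i+1}$, which accounts for the $-2\alpha_3$ and $2\alpha_1$ entries, producing $(0,-2\alpha_3,\alpha_6,2\alpha_1,\alpha_5,-\alpha_4,-\alpha_2)_{B_i}$.

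The main step is therefore purely bookkeeping: carefully tracking which basis vector $e_{i+k}$ the map sends each $e_{i+j}$ to, and then re-expressing the image in the ordered basis $B_i=\{e_i,e_{i+1},\dots,e_{i+6}\}$ so that the output coordinate in slot $k$ collects exactly the terms landing on $e_{i+k}$. The only genuine subtlety — and the place where a sign or index slip is most likely — is the index arithmetic modulo $7$ together with the antisymmetric pairing $e_{i+p}\leftrightarrow e_{i+q}$ built into $\delta(i)$: one must confirm that the pairs $(i+2,i+6)$, $(i+4,i+5)$, $(i+1,i+3)$ are matched with the correct signs dictated by Eq.~\eqref{eq_hola2} and the definition of $\delta(i)$. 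Since the restriction simply drops the $e_0$-component (harmless because $d=0$ for both $s_1$ and $s_2$), no conceptual obstacle remains, and the corollary follows by direct substitution into the formula for $\rho_{\f g_2}$.
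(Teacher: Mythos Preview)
Your proposal is correct and follows exactly the route the paper intends: the corollary is stated without an explicit proof precisely because it is a direct substitution of $s_1=(0,1,-1)$ and $s_2=(2,-1,-1)$ into the formula $\rho_{\f g_2}((a,b,c)g_i)=\delta(i)_{(a,b,c,0)}\vert_{\bb F^7}$ given just above it, together with the bookkeeping you describe on the basis $B_i$. Your verification of the coordinates is accurate, and your remark that $d=0$ guarantees the image stays inside $\bb F^7$ is the only point that needs noting.
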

 This avoids using the octonion algebra, and especially, it avoids using derivations of the octonion algebra, 
 which are obviously   painful. (Note that changing from the basis $B_i$ to $B_j$ only involves shifting the coordinates in cycles.)
 
\section{generalized group  algebras coming from graded contractions}\label{se_3}

More examples of non-necessarily reductive Lie algebras which are generalized group  algebras can be obtained with a tool proposed by physicists, that one of graded contractions.

\subsection{Preliminaries on graded contractions}\label{subsec_prelim_gr_cont}

This section is mainly extracted from \cite[Sections 2 and 3]{draper2024gradedg2}, although there the chosen field is $\bb C$. Note that the proofs can be adapted without significative changes.
Let $G$ be an arbitrary abelian group. 

\begin{define}
Let $\Gamma: \la = \bigoplus_{g\in G} \la_g$ be a $G $-grading on a   Lie algebra $\la$ over $\bb F$.
\begin{itemize}
\item
A \emph{graded contraction} of $\Gamma$ is a map $\varepsilon\colon G\times G\to \bb{F}$ such that the vector space $\la$ endowed with the product $[x, y]^\varepsilon := \varepsilon(g, h)[x, y]$, for $x\in \la_g,  y\in \la_h$, $g,h\in G$, is a Lie algebra. We write $\la^\ep$ to refer to $(\la, [\cdot, \cdot]^\epsilon)$,
which is $G$-graded too with $(\la^\ep)_g=\la_g$. 

\item
We will say that two graded contractions $\ep$ and $\ep'$ of $\Gamma$
are  \emph{equivalent}, written $\ep\sim\ep'$, if $\la^{\ep}$ and $\la^{\ep'}$ are isomorphic as   graded algebras, i.e., 
there is an isomorphism of  Lie algebras  $f\colon\la^\ep\to\la^{\ep'}$   such that for any $g\in G$ there is $h \in G$ with $f(\la_g) = \la_h$.
\end{itemize}
\end{define}

\begin{remark}\label{re_general} (\cite[Remark~2.9]{draper2024gradedg2})
 If $\Gamma$ is a grading on a Lie algebra $\la$,  
 an arbitrary map $\ep\colon G\times G\to \bb{F}$ is a graded contraction of $\Gamma$  if and only if 
 \begin{enumerate}
\item[(a1)] $\big(\ep(g, h) - \ep(h, g)\big)[x, y] = 0,$
\item[(a2)] $\big(\ep(g, h, k) - \ep(k, g, h)\big)[x, [y, z]]
+ \big(\ep(h, k, g) - \ep(k, g, h)\big)[y, [z, x]] = 0$,
\end{enumerate}
 for all $g, h, k \in G$ and any choice of homogeneous elements $x\in \la_g, \, y\in \la_h, \, z\in\la_k$. 
 Here $\ep\colon G \times G \times G \to \bb{F}$ denotes the ternary map defined as
$\ep(g, h, k) := \ep(g, h + k)\ep(h, k)$.
 \end{remark}
 
  These conditions are in general strongly dependent on   the considered grading $\Gamma$ on $\la$. 
  But the $ \bb Z_2^3$-gradings $\Gamma_{\f d_4}$, $\Gamma_{\f b_3}$ and $\Gamma_{\f g_2}$ 
  have some properties (\cite[Lemma~2.2]{draper2024gradedb3d4})
  that make it possible to give them a common treatment:
      \begin{enumerate}
     \item[(i)] $\la_e=0$;
     \item[(ii)]  $[\la_g,\la_h]=\la_{g+h}$ for all $g\ne h\in G^\times$;
     \item[(iii)]  If  $\langle g, h, k \rangle=G$, then there exist $x\in\la_g$, $y\in\la_h$ and $z\in\la_k$ such that 
     the set $\{[x,[y,z]],[y,[z,x]]\}$ is   linearly independent.
     \end{enumerate}

         This permits easily to prove (arguments as in \cite[Lemma~3.2]{draper2024gradedg2}) that 
         \begin{lemma}
         For any graded contraction $\ep$   of   $\Gamma\in\{\Gamma_{\f d_4},\Gamma_{\f b_3},\Gamma_{\f g_2}\}$,  
 there exists another   graded contraction $\ep'$ of $\Gamma$   equivalent to $\ep$ satisfying $\ep'(g, g) = \ep'(e, g) = \ep'(g, e) = 0$.
Any map $\ep'\colon G\times G \to \bb{F}$ satisfying this condition  will be called \emph{admissible}.
\end{lemma}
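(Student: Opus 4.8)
The plan is to produce $\ep'$ explicitly, by simply setting $\ep$ equal to zero on the three types of pairs in question, and then to check that this modification alters neither the contracted bracket nor, a fortiori, the isomorphism class of the contracted algebra. The key observation is that, for the three gradings under consideration, the brackets governed by the values $\ep(g,g)$, $\ep(e,g)$ and $\ep(g,e)$ vanish identically, so these values are irrelevant to the product $[\cdot,\cdot]^\ep$.

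First I would record the relevant vanishing. Since $G=\bb Z_2^3$, every element satisfies $2g=e$, so for each $g\in G$ the grading forces $[\la_g,\la_g]\subset\la_{2g}=\la_e$, and property (i) gives $\la_e=0$; hence $[\la_g,\la_g]=0$. Likewise, again by (i), $[\la_e,\la_g]=[\la_g,\la_e]=0$ trivially, as $\la_e=0$. Therefore, for any homogeneous $x\in\la_g$, $y\in\la_h$ with $(g,h)$ of one of the forms $(g,g)$, $(e,g)$ or $(g,e)$, one has $[x,y]=0$.

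Next I would define $\ep'\colon G\times G\to\bb F$ by $\ep'(g,h):=\ep(g,h)$ whenever $g\ne h$ and $g,h\ne e$, and $\ep'(g,g):=\ep'(e,g):=\ep'(g,e):=0$ otherwise. By the previous paragraph, for every homogeneous pair $x\in\la_g$, $y\in\la_h$ we have $\ep(g,h)[x,y]=\ep'(g,h)[x,y]$: on the pairs where $\ep$ and $\ep'$ disagree the common factor $[x,y]$ is zero, and on all remaining pairs $\ep$ and $\ep'$ coincide. Consequently the two contracted products agree, $[\cdot,\cdot]^{\ep'}=[\cdot,\cdot]^\ep$, so $\la^{\ep'}=\la^\ep$ as ($G$-graded) Lie algebras. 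In particular $\la^{\ep'}$ is a Lie algebra, whence $\ep'$ is a genuine graded contraction, and the identity map is a graded Lie isomorphism $\la^\ep\to\la^{\ep'}$, giving $\ep'\sim\ep$. Finally $\ep'$ satisfies the required normalization by construction, so it is admissible.

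There is no real obstacle here: the feature that makes the argument immediate is that $\la_e=0$ together with the $2$-torsion of $\bb Z_2^3$ forces all the ``diagonal'' brackets $[\la_g,\la_g]$ and all the ``neutral'' brackets $[\la_e,\la_g]$ to vanish, so that the normalization costs nothing and does not even require the use of equivalence beyond the identity map (in contrast to the more delicate rescaling needed for general $G$ in \cite{draper2024gradedg2}). The only point to keep in mind is that property (ii) is not needed for this particular statement, whereas (i) and the $2$-torsion of $G$ are precisely what drive the vanishing of the relevant brackets.
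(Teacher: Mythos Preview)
Your argument is correct and is exactly the intended one: the paper does not spell out a proof at all but simply states that the lemma follows easily by the arguments of \cite[Lemma~3.2]{draper2024gradedg2}, and those arguments amount to precisely your observation that property~(i) together with the $2$-torsion of $\bb Z_2^3$ forces $[\la_g,\la_g]=[\la_e,\la_g]=[\la_g,\la_e]=0$, so zeroing out the corresponding values of $\ep$ leaves $[\cdot,\cdot]^\ep$ unchanged and the identity map gives the required graded isomorphism. One small remark: your closing comment about a ``more delicate rescaling needed for general $G$'' in \cite{draper2024gradedg2} is slightly off, since that reference also works with $G=\bb Z_2^3$ and the very same grading $\Gamma_{\f g_2}$, so the argument there is the same as yours rather than a more general one.
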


Conditions in Remark~\ref{re_general}  can be weakened for admissible maps: 
	
\begin{lemma}\label{ref_existeadmisible}
An admissible map $\ep\colon  G \times G \to \bb{F}$ is a graded contraction of 
$\,\Gamma\in\{\Gamma_{\f d_4},\Gamma_{\f b_3},\Gamma_{\f g_2}\}$
 if and only if the following conditions hold for all $g, h, k \in G$:
\begin{enumerate}
\item[\rm (a1)'] $\ep(g, h) = \ep(h, g)$,  
\item[\rm (a2)'] $\ep(g, h, k) = \ep(k, g, h)$, provided that $G = \langle g, h, k \rangle$. 
\end{enumerate}
\end{lemma}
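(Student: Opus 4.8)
The plan is to deduce Lemma~\ref{ref_existeadmisible} directly from the general criterion of Remark~\ref{re_general}, using the admissibility hypothesis together with properties (i)--(iii) of the gradings $\Gamma_{\f d_4},\Gamma_{\f b_3},\Gamma_{\f g_2}$. The strategy is to show that, for admissible $\ep$, conditions (a1) and (a2) of Remark~\ref{re_general} are \emph{equivalent} to the simpler scalar identities (a1)$'$ and (a2)$'$. One direction is trivial: if (a1)$'$ and (a2)$'$ hold, then (a1) and (a2) hold as identities scaled against brackets, so $\ep$ is a graded contraction (one must also handle the degenerate cases, where $\langle g,h,k\rangle\ne G$, using admissibility). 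The substantive direction is the converse, and that is where properties (i)--(iii) enter.

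First I would address (a1)$'$. Fix $g,h\in G$. By property (ii), for $g\ne h$ in $G^\times$ there exist homogeneous $x\in\la_g$, $y\in\la_h$ with $[x,y]\ne 0$, so condition (a1), namely $(\ep(g,h)-\ep(h,g))[x,y]=0$, forces $\ep(g,h)=\ep(h,g)$. The remaining cases --- $g=h$, or $g=e$, or $h=e$ --- are covered for free because admissibility makes both $\ep(g,h)$ and $\ep(h,g)$ vanish there. So (a1)$'$ holds for \emph{all} $g,h\in G$, not just the generic ones.

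Next I would establish (a2)$'$ in the case $G=\langle g,h,k\rangle$. Here property (iii) is the key: it guarantees a choice of $x\in\la_g$, $y\in\la_h$, $z\in\la_k$ for which $[x,[y,z]]$ and $[y,[z,x]]$ are linearly independent. Condition (a2) then reads
\[
\big(\ep(g,h,k)-\ep(k,g,h)\big)[x,[y,z]] + \big(\ep(h,k,g)-\ep(k,g,h)\big)[y,[z,x]] = 0,
\]
and linear independence forces both coefficients to vanish; in particular $\ep(g,h,k)=\ep(k,g,h)$, which is exactly (a2)$'$. (The second coefficient gives $\ep(h,k,g)=\ep(k,g,h)$ as well, i.e.\ full cyclic invariance, consistent with (a2)$'$ applied to the cyclic permutations.) The main obstacle is handling the \emph{degenerate} cases in the converse direction, where $\langle g,h,k\rangle\subsetneq G$ and property (iii) is unavailable --- but (a2)$'$ only asserts the identity \emph{provided} $G=\langle g,h,k\rangle$, so these cases simply fall outside the claim and need not be verified for (a2)$'$. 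Conversely, when proving that (a1)$'$ and (a2)$'$ suffice, I would argue that in the degenerate situations at least one of $x,y,z$ can be taken so that a relevant bracket vanishes (using (i) for the neutral element, or the abelianness/constant-dimension structure so that the double brackets appearing in (a2) already collapse), so that (a2) is automatically satisfied regardless of the $\ep$-values; this is the delicate bookkeeping step and I would organize it by splitting on how many of $g,h,k$ coincide or equal $e$, invoking that the relevant triple brackets lie in components forced to be zero or that the coefficient multiplies a vanishing bracket.

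Finally I would remark that since all three gradings share properties (i)--(iii), the argument is uniform across $\Gamma\in\{\Gamma_{\f d_4},\Gamma_{\f b_3},\Gamma_{\f g_2}\}$, so no case distinction among the three algebras is needed. This mirrors the proof of the analogous statement in \cite[Section~3]{draper2024gradedg2}, and the adaptation requires no significant change.
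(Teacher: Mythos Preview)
The paper does not prove this lemma explicitly; it is imported from \cite[\S3]{draper2024gradedg2} with the blanket remark that the proofs there adapt without significant change, and your strategy coincides with that reference. Your sketch is essentially correct, with one inaccuracy in the degenerate bookkeeping for the direction (a1)$'$+(a2)$'\Rightarrow$(a2): when two of $g,h,k$ coincide in $G^\times$ (say $g=h$, $k\notin\{e,g\}$), neither $[x,[y,z]]$ nor $[y,[z,x]]$ need vanish; rather $[x,y]\in\la_{2g}=\la_e=0$ forces, via the Jacobi identity in $\la$, that $[x,[y,z]]=-[y,[z,x]]$, and you still need admissibility (to get $\ep(k,g,h)=\ep(k,e)\ep(g,g)=0$) together with (a1)$'$ (to get $\ep(g,h,k)=\ep(g,g+k)\ep(g,k)=\ep(g,k+g)\ep(k,g)=\ep(h,k,g)$) so that the two surviving coefficients in (a2) agree and the sum cancels. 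The analogous use of (a1)$'$ is required in the cases $h=k$ and $g=k$. Thus (a2) is \emph{not} satisfied ``regardless of the $\ep$-values'' in those cases. With that correction the case analysis (neutral element present; all three distinct nonzero with $g+h+k=e$; two equal) is complete and the argument goes through.
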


Ultimately, this enables to find, in \cite{draper2024gradedb3d4},  all the admissible graded contractions of $\Gamma_{\f d_4}$ and $\Gamma_{\f b_3}$ up to equivalence for $\bb F=\bb C$, adapting the results on \cite{draper2024gradedg2} about   the $\bb Z_2^3$-grading  on the simple Lie algebra $\f g_2$  obtained as derivations of the octonion algebra (our $\Gamma_{\f g_2}$). 
Some of the results are valid independently of the field, 
but not all, as shown, for instance, in \cite[Proposition~4.1]{draper2024gradedb3d4}, which contains some comments on the real field.

 \subsection{Graded contractions of $\Gamma_{\f d_4}$, $\Gamma_{\f b_3}$ and $\Gamma_{\f g_2}$}\label{subsec_clasificacion}
 
 Let us recall the above mentioned classification. The key concept is that of support: 
 \begin{define} \label{def_sup}
 Take $X:=\{\{i,j\}:1\le i<j \le7\}$.
 For any
 admissible graded contraction $\ep\colon  G \times G \to \bb{F}$, 
  its \emph{support} is defined by
 $\supp\ep:=\{\{i,j\}\in X:\ep(g_i,g_j)\ne0\}.$
 \end{define}
 
 The support can not be an arbitrary subset of $X$.
 
 \begin{define} \label{def_nice}
If $\langle g_i,g_j,g_k\rangle=G$, that is, for $i, j, k \in I$ distinct with   $k\ne i*j$, consider the cardinal 6 set
 \[
P_{\{i,j,k\}} := \{\{i, j\}, \{j,k\},  \{k,i\}, \{i, j\ast k\}, \{j, k\ast i\}, \{k, i\ast j\}\}\subset X.
\]
A subset $T\subset X$ is called \emph{nice} if,
whenever $i, j, k \in I$ distinct with   $k\ne i*j$,
 $\{i, j\},   \{i\ast j, k\} \in T$, we have $P_{\{i,j,k\}} \subset T$.

\end{define}
 As proved in \cite[Proposition~3.10]{draper2024gradedg2}, the support of any admissible graded contraction is a nice set. And conversely, if $T\subset X$ is a nice set, then $\ep^T\colon G\times G\to\bb F$ is always an admissible graded contraction, for
\begin{equation}\label{eq_ET}
   \ep^T(g_i,g_j)=
   \begin{cases}
   1\qquad\textrm{if $\{i,j\}\in T$,}\\
   0\qquad\textrm{otherwise. }
   \end{cases}  
\end{equation}

Observe that we are not specifying which of the three gradings, $\Gamma_{\f d_4}$, $\Gamma_{\f b_3}$ or $\Gamma_{\f g_2}$, we are referring to: this is due to the surprising result that an admissible map $\ep\colon  G \times G \to \bb{F}$ is a graded contraction of one of such gradings if and only if it is a graded contraction of the other two. 
 However, do not forget that the Lie algebras $\la^\ep$ obtained depend on $\la$, not only on 
 the map  $\ep$, so that with $\la^\ep$ we are referring to several Lie algebras   even of different dimension.

As we are interested in getting non-isomorphic  Lie algebras,  
we have to study the equivalence of graded contractions.
We say that  two nice sets $T$ and $T'$ are \emph{collinear} if there is   a bijection $\mu\colon I\to I$ such that $\mu(i)*\mu(j)=\mu(i*j)$ for all $i\ne j$ and $\{\{\mu(i),\mu(j)\}:\{i,j\}\in T\}= T'$. (The term \emph{collineation} 
for $\mu$
comes from preserving the lines of the Fano plane $PG(2,2)$.) The properties of the three considered gradings, concretely the fact that they share the group of symmetries of the grading, the so-called \emph{Weyl group of the grading}, allowed us to prove that if $T$ and $T'$ were collinear, then $\ep^{T}$ and $\ep^{T'}$ would be equivalent (regardless of whether the algebra under consideration was  $\f g_2^{\bb C}$, $\f{so}_7(\bb C)$ or $\f{so}_8(\bb C)$). Surprisingly, the converse,   which seemed to be true, is not true, but  is \lq nearly\rq\  true:
there are 24 equivalence classes of non-collinear nice sets, and the corresponding  graded contractions by \eqref{eq_ET} are all not equivalent except for only one case. Best of all,  this follows being  true by replacing the complex field by any other field of characteristic different from zero (even this restriction could be weakened).
According to the classification of the nice sets up to collineations, a tedious purely combinatorial task  completed in \cite[Theorem~3.27]{draper2024gradedg2}, 
we can choose the following   representatives of the   classes of the nice sets up to collineations:
\begin{itemize}
    \item $T_1:=\emptyset$;
    \item $T_2:=\{ \{1,2\}\}$;
    \item $T_3:=\{ \{1,2\},\{1,3\}\}$;
    \item $T_4:=\{\{ 1,2\},\{ 1,4\} \}$;
    \item $T_5:=\{ \{1,2 \},\{ 5,7\}\}$;
    \item $T_6:=\{ \{ 1,2\},\{1,4 \},\{2,4 \}\}$;
    \item $T_7:=\{ \{2,4 \},\{ 3,7\},\{ 5,6\}\}$;
    \item $T_8:=\{\{1,2\},\{1,3\},\{1,6\}\}$;
    \item $T_9:=\{ \{ 1,2\},\{ 1,3\},\{ 1,4\}\}$;
    \item $T_{10}:=\{ \{ 1,2\},\{ 1,3\},\{ 1,5\}\}$;
    \item $T_{11}:=\{ \{ 1,2\},\{ 1,7\},\{ 2,7\}\}$;
    \item $T_{12}:=\{ \{ 1,2\},\{ 1,7\},\{ 5,7\}\}$;
    \item $T_{13}:=\{ \{ 1,2\},\{ 1,3\},\{1,4\},\{ 1,6\}\}$;
    \item $T_{14}:=\{\{1,2\},\{1,3\},\{1,4\},\{1,7\}\}$;
    \item $T_{15}:=\{ \{ 1,2\},\{ 1,5\},\{ 1,7\},\{2,7 \}\}$;
    \item $T_{16}:=\{ \{ 1,2\},\{ 1,7\},\{ 2,5\},\{ 5,7\}\}$;
    \item $T_{17}:=\{\{1,2\},\{1,3\},\{1,4\},\{1,6\},\{1,7\}\} $;
    \item $T_{18}:=\{ \{ 1,2\},\{ 1,5\},\{ 1,7\},\{ 2,5\},\{ 2,7\} \}$;
    \item $T_{19}:=\{ \{ 3,5\},\{ 3,6\},\{3,7 \},\{ 5,6\},\{ 5,7\},\{ 6,7\} \} $;
    \item $T_{20}:=\{\{1,2\},\{1,3\},\{1,4\},\{1,5\},\{1,6\},\{1,7\}\} $;
    \item $T_{21}:=\{ \{1,2 \},\{ 1,3\},\{ 1,5\},\{ 2,3\},\{ 2,7\},\{ 3,4\} \}=P_{\{1,2,3\}}$;
    \item $T_{22}:=\{ \{1,2 \},\{ 1,3\},\{ 1,4\},\{ 1,5\},\{ 1,6\},\{ 1,7\}, \{ 2,3\},\{ 2,7\},\{3,4 \},\{4,7\}\} $;
    \item $T_{23}:=X-T_{19}$;
    \item $T_{24}:=X$.
\end{itemize}
(The elements in the $T_i$'s appear to have been changed from \cite{draper2024gradedg2}, simply because the 
labelling of the elements of $\bb Z_2^3$ in \eqref{eq: elementos de Z_2^3}   is different from that one in \cite{draper2024gradedg2}.)
Furthermore, for any $i\ne j$, $\ep^{T_i}$ is not equivalent to $\ep^{T_j}$ except for the case $\{i,j\}=\{8,10\}$ (\cite[Proposition~4.11]{draper2024gradedg2}).

These are not  the only non-equivalent graded contractions. 
For instance, consider, for any $\lambda,\lambda_1,\lambda_2 \in \mathbb{F}-\{0\}$ 
the admissible maps $\eta^\lambda,\mu^\lambda,\beta^{\lambda_1,\lambda_2}\colon  G \times G \to \bb{F}$ given by
\begin{enumerate}
    \item $\eta^\lambda_{i,j}=0$ for $\{i,j\}\notin T_{14}$, $\eta^\lambda_{1,2}=\eta^\lambda_{1,3}=\eta^\lambda_{1,4}=1$  and $\eta^\lambda_{1,7}=\lambda$;
    
    \item $\mu^\lambda_{i,j}=0$ for $\{i,j\}\notin T_{17}$, $\mu^\lambda_{1,2}=\mu^\lambda_{1,4}=\mu^\lambda_{1,6}=1$  and $\mu^\lambda_{1,3}= \mu^\lambda_{1,7}=\lambda$;
    
    \item $\beta^{\lambda_1,\lambda_2}_{i,j}=0$ for $\{i,j\}\notin T_{20}$, $\beta^{\lambda_1,\lambda_2}_{1,2}=\beta^{\lambda_1,\lambda_2}_{1,4}=1$, $\beta^{\lambda_1,\lambda_2}_{1,3}=\beta^{\lambda_1,\lambda_2}_{1,7}=\lambda_1$  and    $\beta^{\lambda_1,\lambda_2}_{1,5}=\beta^{\lambda_1,\lambda_2}_{1,6}=\lambda_2$;
\end{enumerate}
where we write   $\eta_{ij}=\eta(g_i,g_j)$ for any admissible map $\eta$.
(With this notation, $\eta^1=\ep^{T_{14}}$, $\mu^1=\ep^{T_{17}}$ and $\beta^{1,1}=\ep^{T_{20}}$.) These are graded contractions too. In fact, for the complex field, they provide the classification of the graded contractions up to equivalence:
\begin{theorem}
    \label{thm: clases de isomorfia de las contraciones graduadas}
   (\cite[Theorem~4.13]{draper2024gradedg2}  and \cite[Theorem 3.24]{draper2024gradedb3d4})
   For $\bb F=\bb C$, representatives of all isomorphism classes up to equivalence of the graded contractions of any $\Gamma\in\{\Gamma_{\mathfrak{d}_4},\Gamma_{\mathfrak{b}_3},\Gamma_{\mathfrak{g}_2}\}$ are just:
    \begin{itemize}
        \item $\varepsilon^{T_i}$ with $i\neq 8,14,17,20$;
        \item $\eta^\lambda$ with $\lambda\in \mathbb{F}-\{0\}$, where $\eta^\lambda \sim \eta^{\lambda'}$ if and only if $\lambda'\in \{\lambda,\lambda^{-1}\}$;
        \item $\mu^\lambda$ with $\lambda\in \mathbb{F}-\{0\}$, where $\mu^\lambda \sim \mu^{\lambda'}$ if and only if $\lambda'\in \{\pm \lambda,\pm \lambda^{-1}\}$;
        \item $\beta^{\lambda_1,\lambda_2}$ with $\lambda_1,\lambda_2\in \mathbb{F}-\{0\}$, where $\beta^{\lambda_1,\lambda_2} \sim \beta^{\lambda_1',\lambda_2'}$ if and only if the set $\{\pm \lambda_1',\pm \lambda_2'\}$ coincides with either $\{\pm \lambda_1,\pm \lambda_2\}$  or $\{\pm \lambda_1^{-1},\pm \lambda_2\lambda_1^{-1}\}$ or $\{\pm \lambda_2^{-1},\pm \lambda_1\lambda_2^{-1}\}$.
    \end{itemize}
\end{theorem}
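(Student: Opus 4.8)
The plan is to reduce the whole classification to two essentially separate problems — a combinatorial one about supports and an arithmetic one about the surviving scalar parameters — while treating the three gradings at once. First I would invoke the reduction recorded just before the statement: every graded contraction of any $\Gamma\in\{\Gamma_{\f d_4},\Gamma_{\f b_3},\Gamma_{\f g_2}\}$ is equivalent to an admissible one, and by Lemma~\ref{ref_existeadmisible} an admissible map $\ep$ is a graded contraction exactly when it satisfies (a1)$'$ and (a2)$'$. The crucial point is that these two conditions do not see the algebra — they only record which brackets are nonzero, and by property (ii) this pattern is identical for the three gradings. Hence the classification of admissible maps is carried out once and transfers verbatim to $\f g_2^{\bb C}$, $\f{so}_7(\bb C)$ and $\f{so}_8(\bb C)$; this is precisely what justifies speaking of $\varepsilon^{T_i}$, $\eta^\lambda$, $\mu^\lambda$, $\beta^{\lambda_1,\lambda_2}$ without naming the algebra.

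Next I would split off the support from the scalar values. By \cite{draper2024gradedg2} the support of an admissible map is a nice set (Definition~\ref{def_nice}), and conversely $\varepsilon^T$ is admissible for every nice $T$. The combinatorial core is the classification of nice sets up to collineation into the $24$ representatives $T_1,\dots,T_{24}$; I would take this Fano-plane bookkeeping as done in \cite{draper2024gradedg2}. Since the three gradings share the same Weyl group of the grading, each collineation is realized by a graded automorphism, so collinear supports give equivalent contractions. The first bullet of the statement then reduces to showing that for the supports $T_i$ with $i\neq 8,14,17,20$ every nonzero-value assignment can be normalized to the all-ones map $\varepsilon^{T_i}$, together with the single exceptional identification $\varepsilon^{T_8}\sim\varepsilon^{T_{10}}$.

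The normalization tool is the diagonal gauge freedom: a degree-preserving rescaling $x_g\mapsto t_g x_g$ sends $\ep(g,h)$ to $\frac{t_{g+h}}{t_g t_h}\ep(g,h)$, so two admissible maps with the same support are equivalent whenever their quotient is such a coboundary, possibly after composing with a support-preserving collineation. For each fixed $T_i$ I would solve the resulting multiplicative system: for all supports except $T_{14},T_{17},T_{20}$ the gauge group, together with the stabilizer of $T_i$ in the collineation group, acts transitively on the nonzero assignments, collapsing them to $\varepsilon^{T_i}$ (and, in the lone exceptional case, matching $T_8$ with $T_{10}$). For $T_{14},T_{17},T_{20}$ the gauge action leaves genuine invariants — certain ratios of the nonzero scalars — that cannot all be set to $1$, and parametrizing them produces the one-parameter families $\eta^\lambda$, $\mu^\lambda$ and the two-parameter family $\beta^{\lambda_1,\lambda_2}$. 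The residual equivalences inside each family I would read off by letting the stabilizer of the support act on these invariant ratios: the orbits under these explicit finite symmetry groups should reproduce $\{\lambda,\lambda^{-1}\}$ for $\eta$, $\{\pm\lambda,\pm\lambda^{-1}\}$ for $\mu$, and the three listed sets for $\beta$.

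I expect the main obstacle to lie in this last step, namely the \emph{negative} direction of the scalar classification. Exhibiting that the listed parameter values are equivalent is just a matter of writing down the gauge-plus-collineation move; the delicate part is proving that no further identifications occur, which requires constructing an honest invariant — an unordered collection of scalar ratios stable under both gauge transformations and the stabilizer — that separates the remaining classes, and in particular distinguishes the two admissible choices for $\beta$. The hypothesis $\bb F=\bb C$ enters only through solvability of the coboundary equations $\frac{t_{g+h}}{t_g t_h}=c$ and the root extractions needed to realize the sign/inversion symmetries; these are the only field-dependent points, which is exactly why, as the excerpt notes, the conclusion survives over any field of characteristic $\neq 0$, and I would check that the required roots are always available there.
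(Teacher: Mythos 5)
Two preliminary observations. First, the paper itself does not prove this theorem: it is imported verbatim from Theorem~4.13 of \cite{draper2024gradedg2} and Theorem~3.24 of \cite{draper2024gradedb3d4}, and Section~\ref{subsec_clasificacion} only assembles the ingredients (admissible maps, supports, nice sets, collineations, the families $\eta^\lambda$, $\mu^\lambda$, $\beta^{\lambda_1,\lambda_2}$). Second, your reconstruction does follow the broad strategy of those cited works: reduction to admissible maps, the observation that conditions (a1)$'$ and (a2)$'$ are blind to which of the three algebras is considered, the split into a combinatorial classification of supports up to collineation and an arithmetic normalization of the surviving scalars, with genuine parameters only over $T_{14}$, $T_{17}$, $T_{20}$. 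Up to that point the outline is faithful.

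There is, however, a genuine gap in how you treat equivalence, and it is precisely the subtle point of the theorem. You work under the assumption that equivalences between admissible graded contractions are generated by diagonal rescalings (your ``gauge'' moves $\ep(g,h)\mapsto \frac{t_{g+h}}{t_g t_h}\ep(g,h)$) together with collineations realized by the Weyl group. That assumption is false, and the counterexample sits inside the statement you are proving: $\ep^{T_8}\sim\ep^{T_{10}}$. A gauge transformation multiplies each value $\ep(g,h)$ by a nonzero scalar, hence preserves the support \emph{exactly}; a collineation carries the support to a collinear nice set. Since $T_8$ and $T_{10}$ are representatives of \emph{distinct} collineation classes, no composition of gauge moves and collineations can carry a map supported on $T_8$ to one supported on $T_{10}$, so your parenthetical claim that this mechanism achieves the ``matching of $T_8$ with $T_{10}$'' cannot work. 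That exceptional equivalence is realized by a graded isomorphism whose induced permutation of the homogeneous components is \emph{not} a collineation; this is possible because, after contraction, property (ii) ($[\la_g,\la_h]=\la_{g+h}$) fails on all pairs outside the support, so the rigidity that forces collineations in the uncontracted algebra disappears --- exactly why the paper calls the phenomenon surprising. The same gap undermines your negative direction: an invariant stable under gauge moves and the stabilizer of the support is not thereby an invariant of equivalence, so it cannot by itself separate classes (indeed, such an ``invariant'' would wrongly separate $\ep^{T_8}$ from $\ep^{T_{10}}$). The essential missing idea is the analysis of arbitrary grading-permuting isomorphisms between contracted algebras, which is where the real work of \cite{draper2024gradedg2} lies, both for producing the one extra identification and for excluding all others.
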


  In the real case, a lot of work remains to be done to achieve a complete classification: 
  all the above provide non-equivalent graded contractions but the list is far from exhaustive.
Do not forget \cite[Proposition~4.1]{draper2024gradedb3d4} that for the three considered algebras there are admissible graded contractions with  support equal to $X$ which allow to pass from the compact algebra to the split form, obviously not isomorphic.

 \subsection{Graded contractions on generalized group  algebras}\label{sec_ultima}
 
 The following observation is trivial but crucial for our purposes.

\begin{lemma}\label{le_crucial}
Let     $\la= V^\sigma[G]$ a Lie  algebra over $G$, for $V$   and $\sigma$ as in Definition~\ref{def_twisted}.
Let $\Gamma_{(V,\sigma,G)}$ the $G$-grading on $\la$ given by $\la_g=Vg$.
For any graded contraction $\varepsilon\colon G\times G\to \bb{F}$ of $\Gamma_{(V,\sigma,G)}$, the algebra $\la^\ep= V^{\ep\sigma}[G]$ is again a Lie  algebra over $G$.
\end{lemma}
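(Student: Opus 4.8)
The plan is to make explicit the twist that $\la^\ep$ should carry and then check that the contracted bracket literally coincides with the generalized group algebra product this twist defines. First I would set, for all $g,h\in G$, the map $(\ep\sigma)_{g,h}:=\ep(g,h)\,\sigma_{g,h}\colon V\times V\to V$. Since $\ep(g,h)\in\bb F$ is a scalar and $\sigma_{g,h}\in\mathrm{Bil}(V\times V,V)$, the rescaling $\ep(g,h)\sigma_{g,h}$ is again bilinear, so $\ep\sigma\colon G\times G\to\mathrm{Bil}(V\times V,V)$ is a legitimate twist in the sense of Definition~\ref{def_twisted}. This is the only reasonable candidate, and defining it is essentially the whole content of the statement.

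The second step is to verify that $V^{\ep\sigma}[G]$ and $\la^\ep$ have the same product. For homogeneous elements $r_g g\in\la_g=Vg$ and $s_h h\in\la_h=Vh$, the contracted bracket is, by the definition of graded contraction together with Eq.~\eqref{eq_definciondelproducto} for the original bracket,
\[
[r_g g,\,s_h h]^\ep=\ep(g,h)\,[r_g g,\,s_h h]=\ep(g,h)\,\sigma_{g,h}(r_g,s_h)\,(g+h).
\]
On the other hand, the defining product \eqref{eq_definciondelproducto} of $V^{\ep\sigma}[G]$ gives exactly $(\ep\sigma)_{g,h}(r_g,s_h)(g+h)=\ep(g,h)\sigma_{g,h}(r_g,s_h)(g+h)$. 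Extending bilinearly over the formal sums, the two products agree on all of $\la$.

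Finally, since $\ep$ is by hypothesis a graded contraction of $\Gamma_{(V,\sigma,G)}$, the vector space $\la$ equipped with $[\cdot,\cdot]^\ep$ is a Lie algebra; by the previous step this Lie algebra equals $V^{\ep\sigma}[G]$, which is therefore a Lie algebra over $G$. The argument is a bookkeeping identity, so I do not expect a serious obstacle; the only point requiring care is that the grading being contracted is exactly the natural one $\la_g=Vg$, which guarantees that the contraction scalar $\ep(g,h)$ factors out uniformly from the entire homogeneous component $Vg\cdot Vh$ of the product, matching precisely the scalar rescaling of the single bilinear map $\sigma_{g,h}$. It is this compatibility between the contraction datum (one scalar per pair of degrees) and the vector-valued twist that makes the statement \lq trivial but crucial\rq.
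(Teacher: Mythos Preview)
Your proof is correct and follows essentially the same approach as the paper: define $\tilde\sigma_{g,h}=\ep(g,h)\sigma_{g,h}$, verify on homogeneous elements that the contracted bracket coincides with the product of $V^{\tilde\sigma}[G]$, and conclude Lie-ness from the definition of graded contraction. The only difference is cosmetic---you add a brief remark on why the scalar $\ep(g,h)$ factors uniformly through the whole component $Vg\cdot Vh$, which the paper leaves implicit.
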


\begin{proof}
 Take $\tilde\sigma: G\times G \rightarrow \mathrm{Bil}(V\times V,V),\,(g,h)\mapsto\ep(g,h)\sigma_{g,h}$. Let us check that 
$\la^\ep=V^{\tilde\sigma}[G]$. Indeed, for homogeneous elements in $\la$, $x=rg$ and $y=sh$, $r,s\in V$,
$$
[x,y]^\ep=\ep(g,h)[x,y]=\ep(g,h)\sigma_{g,h}(r,s)(g+h)=\tilde\sigma_{g,h}(r,s)(g+h),
$$
which coincides with the bracket in $V^{\ep\sigma}[G]$, so that $\la^\ep$ is a generalized group algebra.
This finishes the argument, since the algebra  $\la^\ep=(\la,[\,,\,]^\ep)$ is Lie by the own definition of graded contraction. 
\end{proof}

This means that all the Lie algebras obtained by means of a graded contraction of $\Gamma_{\f d_4}$, $\Gamma_{\f b_3}$ and $\Gamma_{\f g_2}$ as in Section~\ref{subsec_clasificacion} are examples of generalized group  algebras. This is important for us, because it provides immediately an important collection of examples of generalized group  algebras, 
showing that the example of $\f g_2$ was not isolated at all.
More details on the properties satisfied by the obtained algebras were exhibited in \cite[Theorem~5.1]{draper2024gradedg2}:
thus  there are generalized group  algebras of   very different nature: reductive, nilpotent, solvable but not nilpotent, and so on.  
Although we are far from a classification of the generalized group  algebras which are Lie algebras, 
we have contributed in our first objective,
to highlight the possible importance of the concept of generalized group  algebras in the Lie theory setting.

Moreover, the concrete expressions of the twists $\sigma$'s   
in Proposition~\ref{pr_d4} and Corollary~\ref{cor_casob3}
can be combined with the graded contractions 
described in Theorem~\ref{thm: clases de isomorfia de las contraciones graduadas}
by means of Lemma~\ref{le_crucial},  thus getting totally precise expressions for the twists related to the new family of 
generalized group  algebras.

\begin{cor}\label{cor_results} 
Let $V=\bb F^4$ and $G=\bb Z_2^3$. Then $ \la=V^{\tilde\sigma}[G]$ is a Lie  algebra over $ \bb Z_2^3$
for any $\tilde \sigma\colon G\times G \rightarrow \mathrm{Bil}(V\times V,V)$ in the next list:
$$
\tilde\sigma\in\{\varepsilon^{T_i}\sigma ,\eta^\lambda\sigma , \mu^\lambda\sigma ,\beta^{\lambda_1,\lambda_2}\sigma:i=1,\dots,24;\,\lambda,\lambda_1,\lambda_2\in \mathbb{F}-\{0\}\},
$$
for $\sigma$ given in Eq.~\eqref{eq_hola} and $\varepsilon^{T_i}$, $\eta^\lambda$, $\mu^\lambda$ and $\beta^{\lambda_1,\lambda_2}$ the graded contractions described in Section~\ref{subsec_clasificacion}.
The same result  is true by replacing $V$ with $W=\bb F^3$ or with $S=\langle(1,1,1)\rangle^\perp\le W$ and $\sigma$ with the twist in Eq.~\eqref{eq_hola2}.
\end{cor}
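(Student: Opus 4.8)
The plan is to read this off almost entirely from Lemma~\ref{le_crucial}, so the only genuine work is to recognize each map in the list as a graded contraction of the natural grading $\Gamma_{(V,\sigma,G)}$ on $\la=V^\sigma[G]$. First I would invoke Proposition~\ref{pr_d4}: the isomorphism $\Psi$ of Eq.~\eqref{eq: isomorfismo entre d4 y el twisted} is graded, so it identifies $\f{so}_8(\bb F)$ equipped with $\Gamma_{\f d_4}$ with the derived subalgebra $V^\sigma[G^\times]=[\la,\la]$ equipped with the restriction of $\Gamma_{(V,\sigma,G)}$ to the nonzero components $Vg_i$, $i\in I$. Under this identification the graded contractions of $\Gamma_{\f d_4}$ correspond exactly to those of the grading on $V^\sigma[G^\times]$.

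Next I would handle the extra central component $\la_{g_0}=Vg_0$, which is what distinguishes $\Gamma_{(V,\sigma,G)}$ from $\Gamma_{\f d_4}$. Since $\sigma_{00}=\sigma_{0i}=\sigma_{i0}=0$, the space $Vg_0$ is central and $\la=Vg_0\oplus V^\sigma[G^\times]$ is a direct sum of ideals. Checking conditions (a1) and (a2) of Remark~\ref{re_general} for $\Gamma_{(V,\sigma,G)}$, any triple $g,h,k\in G$ in which some entry equals $g_0$ produces only brackets with a central factor, hence vanishing terms; the admissibility of every map in the list, $\varepsilon(g_0,g)=\varepsilon(g,g_0)=0$, is consistent with this. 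Thus those conditions reduce to the same conditions among indices in $G^\times$, that is, to $\varepsilon$ being a graded contraction of $\Gamma_{\f d_4}$. By the classification recalled in Theorem~\ref{thm: clases de isomorfia de las contraciones graduadas}, each of $\varepsilon^{T_i}$, $\eta^\lambda$, $\mu^\lambda$ and $\beta^{\lambda_1,\lambda_2}$ is indeed an admissible graded contraction of $\Gamma_{\f d_4}$, so each is a graded contraction of $\Gamma_{(V,\sigma,G)}$. Lemma~\ref{le_crucial} then yields that $V^{\varepsilon\sigma}[G]$ is a Lie algebra over $G$, which is the first assertion.

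For the cases $V=W$ and $V=S$ I would repeat the argument verbatim, replacing Proposition~\ref{pr_d4} by Corollary~\ref{cor_casob3} and Proposition~\ref{pr_casog2} respectively: there $W^\sigma[G^\times]\cong\f{so}_7(\bb F)$ carries $\Gamma_{\f b_3}$ and $S^\sigma[G^\times]\cong\f g_2$ carries $\Gamma_{\f g_2}$, with $Wg_0$ and $Sg_0$ again central. The decisive point that allows the very same list of maps to work in all three settings is the coincidence, recalled in Section~\ref{subsec_clasificacion}, of the admissible graded contractions of $\Gamma_{\f d_4}$, $\Gamma_{\f b_3}$ and $\Gamma_{\f g_2}$: since the listed maps are admissible graded contractions of one of them, they are graded contractions of all three, and Lemma~\ref{le_crucial} applies in each case.

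I do not expect a real obstacle here, the statement being essentially a bookkeeping corollary of Lemma~\ref{le_crucial} together with the classification. The one point demanding care, and the only place where the argument could go wrong, is the passage from $\Gamma_{\f d_4}$ on $\f{so}_8(\bb F)$, whose neutral component is zero, to the natural grading $\Gamma_{(V,\sigma,G)}$ on $V^\sigma[G]$, whose neutral component $Vg_0$ is nonzero. Making explicit that this component is central, so that the graded-contraction conditions involving $g_0$ hold automatically and impose no new constraint on $\varepsilon$, is the crux, and it follows at once from the vanishing of $\sigma$ on every pair containing $g_0$.
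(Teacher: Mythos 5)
Your proof is correct and follows essentially the same route as the paper, which presents the corollary as an immediate consequence of Lemma~\ref{le_crucial} applied to the graded contractions classified in Section~\ref{subsec_clasificacion} (via Theorem~\ref{thm: clases de isomorfia de las contraciones graduadas}). The one detail you spell out that the paper leaves implicit --- that a graded contraction of $\Gamma_{\f d_4}$, $\Gamma_{\f b_3}$ or $\Gamma_{\f g_2}$ (whose neutral components vanish) is automatically a graded contraction of the natural grading $\Gamma_{(V,\sigma,G)}$ on $V^\sigma[G]$, because the extra component $Vg_0$ is central and so all conditions involving $g_0$ hold trivially --- is handled correctly and is exactly the right bridging step.
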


\subsection{Some conclusions.}
Only in Corollary~\ref{cor_results}, we already provide  60   different generalized group  algebras which are Lie algebras, distributed in 20 of each dimension between  $16$, $24$ and $32$, 
together with 9 infinite families depending on one or two free parameters,
 again with the aforementioned dimensions. 
 (To be more exact, those families are infinite only if the considered  $\bb F$ is infinite.)
 The provided descriptions permit to multiply easily in these Lie algebras independently of the ground field.
 So, we have shown the potential of the concept of 
generalized group  algebra 
  to provide new examples of Lie algebras.

Thus, a suggestion for describing new Lie algebras with properties is to start with a convenient grading on a possibly well-known Lie algebra with regularity properties on the dimensions of the homogeneous components, and then   study its graded contractions. Perhaps, the results in this work may   seem a coincidence, but even if it were so, there are more coincidences as ours. A convenient candidate for following this study is the exceptional split Lie algebra of dimension 52 of type $F_4$, which is the derivation algebra of an Albert algebra which becomes a twisted group algebra over the group $\bb Z_3^3$.
Graded contractions over this group 
have not been studied so far.

 \bibliographystyle{plain}
\bibliography{Fran_arXiv} 
 \end{document}